
\documentclass[aihp]{imsart}

\RequirePackage{amsthm,amsmath,amsfonts,amssymb}
\RequirePackage[numbers]{natbib}

\usepackage{amscd}
\usepackage{amsthm,amsmath,amsfonts}
\usepackage[utf8]{inputenc}
\usepackage{amssymb}
\usepackage{stmaryrd}
\usepackage{wasysym}
\usepackage{mathrsfs}
\usepackage{hyperref} 
\usepackage{cleveref}
\usepackage{enumitem}
\usepackage{relsize} 
\usepackage{dsfont}
\usepackage{soul}
\usepackage{filecontents}
\hypersetup{colorlinks=true,linkcolor=black,citecolor=black}
\usepackage{color}
\usepackage[all]{xy}
\usepackage{float}
\usepackage{bm}
\usepackage{mathtools}
\usepackage{thmtools}
\usepackage{comment}
\usepackage{tikz}
\usepackage{ytableau}
\usepackage{mathdots}

\startlocaldefs
\setcounter{tocdepth}{1} 
\numberwithin{equation}{section}
\theoremstyle{plain}

\newtheorem{thm}{Theorem}[section]

\newtheorem{prop}[thm]{Proposition}

\newtheorem{lemma}[thm]{Lemma}

\theoremstyle{definition}
\newtheorem{defi}{Definition}

\newtheorem{rmk}{Remark}

\DeclareSymbolFont{bbold}{U}{bbold}{m}{n}
\DeclareSymbolFontAlphabet{\mathbbold}{bbold}

\newcommand{\R}{\mathbb{R}}
\newcommand{\Z}{\mathbb{Z}}
\newcommand{\Q}{\mathbb{Q}}
\newcommand{\N}{\mathbb{N}}
\newcommand{\C}{\mathbb{C}}
\newcommand{\F}{\mathbb{F}}

\newcommand{\E}{\mathbb{E}}

\newcommand{\W}{\mathbb{W}}

\newcommand{\gl}{\mathfrak{gl}}

\newcommand{\bbone}{\mathbbold{1}}
\newcommand{\la}{\lambda}

\newcommand{\eps}{\epsilon}


\newcommand{\tV}{\tilde{V}}

\newcommand{\lan}{\left\langle}
\newcommand{\ran}{\right\rangle}

\newcommand{\tth}{^{th}}


\newcommand{\tnu}{{\tilde{\nu}}}

\newcommand{\sY}{\mathsf{Y}}
\newcommand{\y}{\mathsf{y}}

\newcommand{\bi}{\mathbf{i}}

\renewcommand{\vec}[1]{\boldsymbol{#1}}


\DeclareMathOperator{\Span}{span}
\DeclareMathOperator{\Tr}{Tr}

\DeclareMathOperator{\Sig}{Sig}

\DeclareMathOperator{\Proj}{Proj}

\DeclareMathOperator{\SN}{SN}

\DeclareMathOperator{\diag}{diag}

\DeclareMathOperator{\Mat}{Mat}
\DeclareMathOperator{\val}{val}

\DeclareMathOperator{\Law}{Law}

\newcommand{\Pois}{\mathcal{S}}

\newcommand{\tell}{{\tilde{\ell}}}
\newcommand{\tU}{\tilde{U}}
\newcommand{\GL}{\mathrm{GL}}
\newcommand{\SL}{\mathrm{SL}}

\newcommand{\U}{\mathrm{U}}

\endlocaldefs

\begin{document}

\begin{frontmatter}

\title{What is a $p$-adic Dyson Brownian motion?}
\runtitle{What is a $p$-adic Dyson Brownian motion?}

\begin{aug}
\author[A]{\inits{RVP}\fnms{Roger}~\snm{Van Peski}\ead[label=e1]{rvp@mit.edu}}

 \address[A]{Department of Mathematics, KTH Royal Institute of Technology, Stockholm, Sweden\printead[presep={,\ }]{e1}}

\end{aug}

\begin{abstract}
We consider the singular numbers of a certain explicit continuous-time Markov jump process on $\mathrm{GL}_N(\mathbb{Q}_p)$, which we argue gives the closest $p$-adic analogue of multiplicative Dyson Brownian motion. We do so by explicitly classifying the possible dynamics of singular numbers of processes on $\mathrm{GL}_N(\mathbb{Q}_p)$ satisfying natural properties possessed by Brownian motion on $\mathrm{GL}_N(\mathbb{C})$. Computing the evolution of singular numbers explicitly, we find that the $N$-tuple of singular numbers in decreasing order evolves as a Poisson jump process on $\Z^N$, with ordering enforced by reflection off the walls of the positive type $A$ Weyl chamber. This contrasts with---and provides a $p$-adic analogue to---the behavior of classical Dyson Brownian motion, where ordering is enforced by conditioning to avoid the Weyl chamber walls.
\end{abstract}


\begin{keyword}[class=MSC]

 \kwd[Primary ]{15B52}
 \end{keyword}

\begin{keyword}
\kwd{Dyson Brownian motion}
\kwd{multiplicative Brownian motion}
\kwd{Poisson random walks}
\kwd{p-adic random matrices}
\end{keyword}

\end{frontmatter}

\maketitle

\tableofcontents

\section{Introduction}

\subsection{Preface} Around the turn of the millennium, a series of works (detailed later in \Cref{rmk:reflected_rw}) found that the following three are the same: 
\begin{enumerate}[label=(\Roman*)]
\item Eigenvalues and singular values of certain Brownian motions on spaces of matrices,\label{item:mbm}
\item Brownian motions on the positive Weyl chamber in each Lie type, conditioned to avoid the walls for all time, and \label{item:weyl}
\item Path transformations of Brownian motions coming from the Robinson-Schensted-Knuth (RSK) correspondence. \label{item:rsk}
\end{enumerate}
Dyson \cite{dyson1962brownian} introduced the first version of \ref{item:mbm}. The corresponding stochastic evolution of the eigenvalues, which may be viewed as a stochastic process on the positive type $A$ Weyl chamber, is now called \emph{Dyson Brownian motion}.

It has become apparent that many results on real and complex random matrices have interesting and nontrivially different analogues for $p$-adic random matrices. This work finds a version of the connection \ref{item:mbm} $\leftrightarrow$ \ref{item:weyl} on the $p$-adic side. The main result, \Cref{thm:dbm_poisson_intro}, shows that the analogue of Dyson Brownian motion---more specifically, its multiplicative analogue defined in the next subsection---is a random walk on the integer points of the positive Weyl chamber which \emph{reflects} off the walls, rather than being conditioned to avoid them as with Dyson's process. It is quite interesting to speculate over analogues of \ref{item:rsk} for this process, and we do so later in \Cref{rmk:reflected_rw}, but so far we have little to say.

\subsection{Dyson Brownian motion} Dyson \cite{dyson1962brownian} showed the following elegant and useful fact: let $A(T), T \in \R_{\geq 0}$ be a stochastic process on $N \times N$ Hermitian matrices where each above-diagonal entry evolves according to an independent standard complex Brownian motion (and the below-diagonal entries are determined by Hermiticity), and each diagonal entry by $\sqrt{2}$ times a standard real Brownian motion. Then the eigenvalues evolve as $N$ independent Brownian motions conditioned not to intersect for all time, or equivalently $N$ independent Brownian motions with certain Coulomb repulsion interactions, a stochastic process known as \emph{Dyson Brownian motion}. Said another way, the vector $(\la_1(T),\la_2(T),\ldots,\la_N(T))$ of eigenvalues in decreasing order evolves as a Brownian motion in $\R^N$ conditioned to remain in the positive type $A$ Weyl chamber\footnote{Strictly speaking, it is $\W_N/\R \cdot (1,\ldots,1)$ which is the positive Weyl chamber of type $A_{N-1}$, but we follow the standard abuse of terminology and refer to $\W_N$ as a Weyl chamber.} $\W_N := \{(x_1,\ldots,x_N) \in \R^N: x_1 \geq \ldots \geq x_N\}$ for all time. More recently, this process became a key tool for proving universality results in random matrix theory via the so-called heat flow method in work of Bourgade, Erd\"os, Schlein, Yau, Yin and others \cite{bourgade2014edge,bourgade2016fixed,erdos2010universality,erdHos2011universality,erdHos2012universality,erdHos2012bulk,erdHos2015gap}. 

A similar construction exists for singular values of non-Hermitian matrices in $\GL_N(\C)$. The \emph{multiplicative Brownian motion} $\sY^{(N)}(T)$ (with some initial condition) is the stochastic process with infinitesimal generator given by $\frac{1}{2}\Delta$, where $\Delta$ is the Laplace-Beltrami operator on $\GL_N(\C)$ with respect to the metric given by the Hilbert-Schmidt inner product $\lan X, Y \ran := \frac{1}{N}\Tr (X Y^*)$ on $Lie(\GL_N(\C)) = \gl_N(\C) = \Mat_N(\C)$. We recommend Jones-O'Connell \cite{jones2006weyl} for an excellent exposition of multiplicative Brownian motions in the cases of common matrix groups and relations to Brownian motions on Weyl chambers in general Lie type.

The reason for the name `multiplicative' is that multiplicative Brownian motion may be viewed as a limit of discrete-time matrix product random walks on $\GL_N(\C)$ reminiscent of Donsker's theorem, while the usual (additive) Brownian motion on Hermitian matrices described above is a limit of discrete-time random walks given by sums of random matrices. In the multiplicative case, if $A_1^{(\ell)},A_2^{(\ell)},\ldots$ are iid elements of $\GL_N(\C)$ with distributions concentrating around the identity as $\ell \to \infty$, then the discrete random walk $Y^{(N,\ell)}(i) := A_i^{(\ell)} \cdots A_1^{(\ell)}$ converges to $\sY^{(N)}(T)$ as $\ell \to \infty$ with $i$ scaled appropriately. See Stroock-Varadhan \cite[Theorem 2.4]{stroock1973limit} for a precise statement which applies in much greater generality. 

The simple description of eigenvalues via Dyson Brownian motion has an analogue in this multiplicative context. Namely, if $\y_1(T) \geq \y_2(T) \geq \ldots \geq \y_N(T)$ are the singular values of $\sY^{(N)}(T)$, then the vector $(\log \y_1(T),\ldots,\log \y_N(T))$ evolves as a standard Brownian motion in $\R^N$ with drift $(N-1,N-3,\ldots,3-N,1-N)$, conditioned to remain in $\W_N$ for all time. We therefore call this process \emph{multiplicative Dyson Brownian motion}; in the literature it is also often referred to as the radial part of a Brownian motion on the homogeneous space $\GL_N(\C)/\U(N)$. The present work considers $p$-adic analogues of this multiplicative setting rather than the additive one, as the former turns out to be simpler, see \Cref{rmk:additive_case} for further discussion.


\subsection{$p$-adic random matrix theory} Many works have considered discrete random matrices over the integers $\Z$ or $p$-adic integers $\Z_p$, which yield distributions on abelian groups appearing in arithmetic statistics and the theory of random graphs: see Friedman-Washington \cite{friedman-washington}, Ellenberg-Venkatesh-Westerland \cite{ellenberg2011modeling,ellenberg2016homological}, Bhargava-Kane-Lenstra-Poonen-Rains \cite{bhargava2013modeling}, Clancy-Kaplan-Leake-Payne-Wood \cite{clancy2015cohen}, Wood \cite{wood2015random,wood2017distribution,wood2018cohen}, Kovaleva \cite{kovaleva2020note}, Lipnowski-Sawin-Tsimmerman \cite{lipnowski2020cohen}, M\'esz\'aros \cite{meszaros2020distribution}, Cheong-Huang \cite{cheong2021cohen} -Kaplan \cite{cheong2022generalizations} and -Yu \cite{cheong2023cokernel}, Nguyen-Wood \cite{nguyen2022random,nguyen2022local}, and Lee \cite{lee2022universality,lee2023joint}. The $p$-adic case is simpler than the integer case, while retaining most interesting features. For the basic definitions and properties of $p$-adic numbers we refer to the first two paragraphs of \Cref{sec:p-adic}.

Singular value (also known as Cartan) decomposition works in the same way for $p$-adic matrices, after replacing $\GL_N(\C)$ and its maximal compact subgroup $\U(N)$ with $\GL_N(\Q_p)$ and its maximal compact subgroup $\GL_N(\Z_p)$. Namely, just as singular value decomposition tells that the double cosets of $\U(N) \backslash \GL_N(\C) / \U(N)$ are parametrized by weakly decreasing tuples $(\la_1,\ldots,\la_N) \in \R_+^N$ of singular values, Smith normal form tells that the double cosets of $\GL_N(\Z_p) \backslash \GL_N(\Q_p) / \GL_N(\Z_p)$ are parametrized by the set of \emph{integer signatures of length $N$},
\begin{equation*}
\Sig_N := \{(\la_1,\ldots,\la_N) \in \Z^N: \la_1 \geq \ldots \geq \la_N\}.
\end{equation*}
Concretely, for any $A \in \GL_N(\Q_p)$, there exist $U,V \in \GL_N(\Z_p)$ and $\la = (\la_1,\ldots,\la_N) \in \Sig_N$ for which
\begin{equation}\label{eq:snf}
A = U \diag(p^{\la_1},\ldots,p^{\la_N})V,
\end{equation}
and any two such decompositions may feature different $U,V$ but will still have the same signature $\la$. The integers $\la_i$ are known as the \emph{singular numbers}\footnote{We note that previous works \cite{assiotis2022infinite,bufetov2017ergodic,neretin2013hua} use the opposite sign convention on singular numbers; we use the above one so that they are nonnegative when $A \in \Mat_N(\Z_p)$.} of $A$, and when $A$ has decomposition as in \eqref{eq:snf} we use notation
\begin{equation*}
    \SN(A) := \la
\end{equation*}
for the $N$-tuple of singular numbers in weakly decreasing order. The singular numbers are the analogues of the logarithms of singular values in the complex setting, though a key difference which will be relevant later is that $\Sig_N$ is a discrete set.

Some previous works in $p$-adic random matrix theory by Neretin \cite{neretin2013hua}, Bufetov-Qiu \cite{bufetov2017ergodic}, Assiotis \cite{assiotis2022infinite}, and the author \cite{van2020limits,vanpeski2021halllittlewood}, which come from a more Lie-theoretic standpoint than those mentioned above, have found close structural analogies between singular values of complex matrices and their analogues for $p$-adic matrices. This begged the question of whether the multiplicative Brownian motion on $\GL_N(\C)$ and multiplicative Dyson Brownian motion have analogues in the $p$-adic setting. To better phrase this question we must go beyond explicit constructions and understand what characterizes multiplicative Brownian motion. 

\subsection{Invariant stochastic processes on matrices} For multiplicative Brownian motion $\sY^{(N)}(T)$, it is natural to consider its \emph{multiplicative increments} $\sY^{(N)}(t_{i})\sY^{(N)}(t_{i-1})^{-1}$ for a series of times $t_1 < t_2 < \ldots < t_k$, as the value of the multiplicative Brownian motion at a given $t_i$ is the product of the corresponding increments. These increments $\sY^{(N)}(T+s)\sY^{(N)}(T)^{-1}$ satisfy
\begin{enumerate}[label=(\roman*)]
\item Independence: $\sY^{(N)}(T+s)\sY^{(N)}(T)^{-1}$ is independent of the trajectory $\sY^{(N)}(\tau), 0 \leq \tau \leq T$,
\item Stationarity: $\sY^{(N)}(T+s)\sY^{(N)}(T)^{-1} = \sY^{(N)}(s)\sY^{(N)}(0)^{-1} $ in distribution for any $T \geq 0$, and
\item Isotropy with respect to $\U(N)$: For any $U \in \U(N)$, $\sY^{(N)}(T+s)\sY^{(N)}(T)^{-1} = U\sY^{(N)}(T+s)\sY^{(N)}(T)^{-1}U^{-1}$ in distribution. Equivalently, $\Pr(\sY^{(N)}(T+s) \in S| \sY(T) = x) = \Pr(\sY^{(N)}(T+s) \in US| \sY(T) = Ux)$ for any $U \in \U(N)$, $x \in \GL_N(\C)$ and measurable $S \subset \GL_N(\C)$. \label{item:isotropy}
\end{enumerate}

The first two are familiar from the theory of Brownian motion on $\R$, while a version of the latter with respect to the rotation group $O(N)$ may be seen as soon as one considers Brownian motion on $\R^N$. The fixed-time marginals of any process satisfying the above and starting at the Haar measure must be infinitely-divisible measures invariant with respect to the action of $\U(N)$ on the left and right, which were classified by the generalized Lévy-Khintchine theorem of Gangolli \cite{gangolli1964isotropic} (see also earlier work of Hunt \cite{hunt1956semi}). Later, Gangolli \cite{gangolli1965sample} explicitly constructed stochastic processes with these fixed-time marginals, finding them to be mixtures of multiplicative Brownian motion and Poisson jump processes, as with the classical Lévy-Khintchine theorem on $\mathbb{R}$. Of these, only the multiplicative Brownian motion\footnote{This uniqueness actually applies after restricting to the subgroup $\SL_N(\C)$. In the case of $\GL_N(\C)$ there is an additional multiplicative Brownian motion on $\R_+$ corresponding to the determinant, leading to a two-parameter family of processes with continuous sample paths. See Jones-O'Connell \cite[p108]{jones2006weyl} for discussion.} has continuous sample paths. We note that strictly speaking, the uniqueness statement applies only to the infinitely divisible measures which are the single-time marginals of the process, and we are not aware of a uniqueness statement at the process level analogous to the characterization of Brownian motion on $\R$.

One might optimistically hope for a similar classification in the $p$-adic case, and hope that the `right' analogue of multiplicative Brownian motion yields an elegant stochastic process on singular numbers similar to the above multiplicative Dyson Brownian motion. Both hopes will turn out to be well-founded.

\subsection{Classifying invariant processes} Now we turn to the question of Markov processes on $\mathrm{GL}_{N}(\mathbb{Q}_{p})$ with stationary, independent, $\GL_N(\Z_p)$-isotropic increments, where $\GL_N(\Z_p)$-isotropy is defined by \ref{item:isotropy} above with $\GL_N(\C)$ replaced by $\GL_N(\Q_p)$ and $\U(N)$ replaced by $\GL_N(\Z_p)$. The following definition gives a wide class of processes which are easily seen to have these properties.

\begin{defi}\label{def:the_processes}
Let $N \in \Z_{\geq 1}$, let $M$ be any probability measure on $\Sig_N$, and let $c \in \R_{\geq 0}$. Then we define the process $Y^{(N,M,c)}(\tau), \tau \in \R_{\geq 0}$ on $\GL_N(\Q_p)$ by 
\begin{equation}
Y^{(N,M,c)}(\tau) := U_{P(\tau)} \diag(p^{\nu^{(P(\tau))}_1},\ldots,p^{\nu^{(P(\tau))}_N}) V_{P(\tau)}  \cdots U_1 \diag(p^{\nu^{(1)}_1},\ldots,p^{\nu^{(1)}_N}) V_1 
\end{equation}
where $P(\tau)$ is a Poisson process on $\Z_{\geq 0}$ with rate $c$, and $\nu^{(i)} \sim M$ and $U_i,V_i \sim M_{Haar}(\GL_N(\Z_p))$ are iid.
\end{defi}

Our first result, \Cref{thm:classify_processes_intro}, says that at the level of singular numbers \Cref{def:the_processes} is the only example.

\begin{thm}\label{thm:classify_processes_intro}
 Let $N \in \mathbb{Z}_{\geq 1}$ and let $X(\tau), \tau \in \mathbb{R}_{\geq 0}$ be a Markov process on $\mathrm{GL}_{N}(\mathbb{Q}_{p})$ started at the identity with stationary, independent, $\GL_N(\Z_p)$-isotropic increments. Then there exists a constant $c \in \mathbb{R}_{\geq 0}$ and a probability measure $M_X$ on $\operatorname{Sig}_{N}$ such that 
\begin{equation}
\SN(X(\tau)) = \SN(Y^{(N,M_X,c)}(\tau)) \quad \quad \quad \quad \text{ in multi-time distribution.}
\end{equation}
\end{thm}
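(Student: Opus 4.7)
The plan is to reduce the theorem to showing that $\SN(X(\tau))$ is a continuous-time Markov chain (CTMC) on the discrete state space $\Sig_N$ whose infinitesimal generator coincides with that of $\SN(Y^{(N, M_X, c)}(\tau))$ for an appropriate $c \in \R_{\geq 0}$ and probability measure $M_X$ on $\Sig_N$.

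First I would establish the SN-Markov property. Let $\mu_s$ denote the law of the increment $X(\tau+s) X(\tau)^{-1}$, which by hypothesis is conjugation-invariant under $K := \GL_N(\Z_p)$. Given $X(\tau) = x = U_x \diag(p^\la) V_x$ with $\SN(x) = \la$, we have $\SN(X(\tau+s)) = \SN(A U_x \diag(p^\la))$ where $A \sim \mu_s$ is independent of the past. Conjugation invariance of $A$ (so $A$ and $U_x A U_x^{-1}$ are equal in distribution) implies that $A U_x$ and $U_x A$ are equal in distribution, hence $\SN(A U_x \diag(p^\la))$ and $\SN(A \diag(p^\la))$ are equal in distribution. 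Since the right-hand side depends on $x$ only through $\la$, this shows $\SN(X(\tau))$ is a CTMC on $\Sig_N$.

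Next I would extract a compound Poisson structure from the c\`adl\`ag L\'evy property of $X$ on the totally disconnected group $\GL_N(\Q_p)$. Since $K$ is a compact open neighborhood of identity, the L\'evy measure of $X$ restricted to $\GL_N(\Q_p) \setminus K$ is finite of total mass $c \in \R_{\geq 0}$; its normalization $\rho_{\mathrm{jump}}$ is $K$-conjugation invariant. Crucially, jumps within $K$ (possibly arising from infinite L\'evy measure there) leave $\SN$ unchanged, so $\SN(X(\tau))$ is a compound Poisson process driven by the finitely many jumps outside $K$, with rate $c$ and jump kernel at state $\la$ given by the law of $\SN(A \diag(p^\la))$ for $A \sim \rho_{\mathrm{jump}}$.

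To identify this with $Y^{(N, M_X, c)}$, I would show that bi-$K$-symmetrizing $\rho_{\mathrm{jump}}$ preserves this kernel. For $V \sim \mathrm{Haar}(K)$ independent of $A$, the same commutation yields $AV$ and $VA$ equal in distribution, so $\SN(AV \diag(p^\la))$ and $\SN(A \diag(p^\la))$ are equal in distribution. Hence $\rho_{\mathrm{jump}}$ may be replaced by its bi-$K$-symmetrization; the latter, being bi-$K$-invariant, equals the law of $U \diag(p^\nu) V$ with $U,V$ independent Haar and $\nu \sim M_X := \SN_*(\rho_{\mathrm{jump}})$, by the bijection between bi-$K$-invariant measures and measures on $\Sig_N$. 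This is exactly the jump distribution of $Y^{(N, M_X, c)}$, so the two SN-CTMCs coincide in multi-time distribution. I expect the main obstacle to be the compound-Poisson step: while standard for L\'evy processes on locally compact groups, here one must also verify that the possibly infinite small jumps within $K$, invisible to $\SN$ but present in $X(\tau)$, do not obstruct the clean CTMC description of $\SN(X(\tau))$.
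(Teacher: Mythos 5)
Your proposal is correct in outline, and it shares the key mechanism with the paper (using $K$-isotropy of increments to argue that left- and right-symmetrizing by Haar elements of $K=\GL_N(\Z_p)$ leaves the law of singular numbers unchanged), but the scaffolding is genuinely different. The paper first proves a discrete-time structure lemma for the increments of $X$ (every increment can be written $U\diag(p^\nu)V$ with $(U,\nu)$ and $(V,\nu)$ each product-distributed), then transfers to continuous time by working on the \emph{discrete} homogeneous space $\GL_N(\Q_p)/\GL_N(\Z_p)$: since this quotient is a countable discrete set, the projected process is automatically a pure jump process, the jump count is Poisson by stationarity and the strong Markov property, and one reduces to the discrete-time lemma between jumps; finally an induction on the number of jumps shows that the (possibly correlated) pair $(U,V)$ in each factor may be replaced by an independent pair without changing the joint law of the singular numbers. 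You instead establish the Markov property of $\SN(X(\tau))$ directly, then invoke L\'evy--Khintchine / Hunt-type theory on the totally disconnected group $\GL_N(\Q_p)$ to extract a finite jump measure outside the compact open subgroup $K$, discard the (possibly infinite) jump activity inside $K$ as invisible to $\SN$, and compare generators via the paper's Feller citation. Both routes work; yours is conceptually slicker once the L\'evy machinery on non-abelian totally disconnected groups is set up, while the paper's route is more elementary and self-contained because projecting to the discrete quotient sidesteps any explicit reference to L\'evy measures. Your anticipated obstacle (infinite small jumps in $K$) is real but is resolved exactly as you expect: such jumps change neither $[X(\tau)]$ nor $\SN(X(\tau))$, which is precisely what the paper exploits by passing to $G/K$ first. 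One small thing to tighten up: you should say explicitly that you are appealing to uniqueness of a CTMC on a countable state space with uniformly bounded generator (the paper's Proposition 4.1 / Feller) when you conclude the multi-time laws coincide from equality of jump kernels and rates.
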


We deduce \Cref{thm:classify_processes_intro} from a later result stated as \Cref{thm:classify_processes_cts_G/K}, which works at the level of the homogeneous space $\mathrm{GL}_{N}(\mathbb{Q}_{p}) / \mathrm{GL}_{N}(\mathbb{Z}_{p})$ analogously to \cite{gangolli1964isotropic}. We note that because the dynamics of \Cref{def:the_processes} commutes with the right action of $\GL_N(\Z_p)$, it projects to Markov dynamics on $\mathrm{GL}_{N}(\mathbb{Q}_{p}) / \mathrm{GL}_{N}(\mathbb{Z}_{p})$. Even at this level, unlike the homogeneous space $\mathrm{GL}_{N}(\mathbb{C}) / \U(N)$, the space $\mathrm{GL}_{N}(\mathbb{Q}_{p}) / \mathrm{GL}_{N}(\mathbb{Z}_{p})$ is countable and naturally carries the discrete topology. Hence one cannot expect an analogue of the continuous multiplicative Brownian motion, but it turns out that the same Poisson jump/matrix product processes appear as in the complex case---the processes $Y^{(N,M,c)}$ provide examples of these.

Given that no process on $\GL_N(\Q_p)/\GL_N(\Z_p)$ with continuous paths exists, one may at least ask for a Poisson jump process with the smallest or simplest jumps. Such a process should have $M_X$ supported on $\delta_{(0[N])}$ and the smallest nontrivial signature $\delta_{(1,0[N-1])}$, where here and below we use the notation $a[k]$ in signatures to denote the integer $a$ repeated $k$ times, so $(0[N])$ is the $N$-tuple with all entries $0$ and $(1,0[N-1])$ is the $N$-tuple with all entries $0$ except for the first $1$. Note that one might equally well replace $\delta_{(1,0[N-1])}$ by $\delta_{(0[N-1],-1)}$, but this is related to the previous case by taking inverse matrices and reversing the left and right actions, so there is no loss in our choice. Because the singular numbers of $Y^{(N,M_X,c)}$ do not change at the jumps where $\nu^{(i)} = (0[N])$, as far as the singular numbers are concerned one may take $M_X = \delta_{(1,0[N-1])}$, up to changing the Poisson rate constant $c$. We see next that the singular numbers of this process have an elegant description, which is our main result. 


\subsection{$p$-adic Dyson Brownian motion}
\begin{defi}\label{def:poisson_walks}
Fix $N \in \mathbb{N}$ and a parameter $t \in (0,1)$. We define the stochastic process $\Pois^{(N)}(\tau)=\left(\Pois_{1}^{(N)}(\tau), \ldots, \Pois_{N}^{(N)}(\tau)\right)$ on $\operatorname{Sig}_{N}$ to be a Poisson process on $\Z^N$ with independent jump rates in each coordinate direction given by $(t,t^2,\ldots,t^N)$, which reflects off the boundary of $\W_N$ and has initial condition $\Pois^{(N)}(0) = (0[N])$. More explicitly, for each $1 \leq i \leq N$, $ \Pois_{i}^{(N)}$ has an exponential clock of rate $t^{i}$, and when the clock associated to $\Pois_{i}^{(N)}$ rings, $\Pois_{i}^{(N)}$ increases by $1$ if the resulting $n$-tuple is still weakly decreasing. If not, then $\Pois_{i-d}^{(N)}$ increases by $1$ instead and $\Pois_{i}^{(N)}$ remains the same, where $d \geq 0$ is the smallest nonnegative integer so that the resulting tuple is weakly decreasing.  
\end{defi}

\begin{figure}[htbp]
\begin{center}
\includegraphics[scale=1]{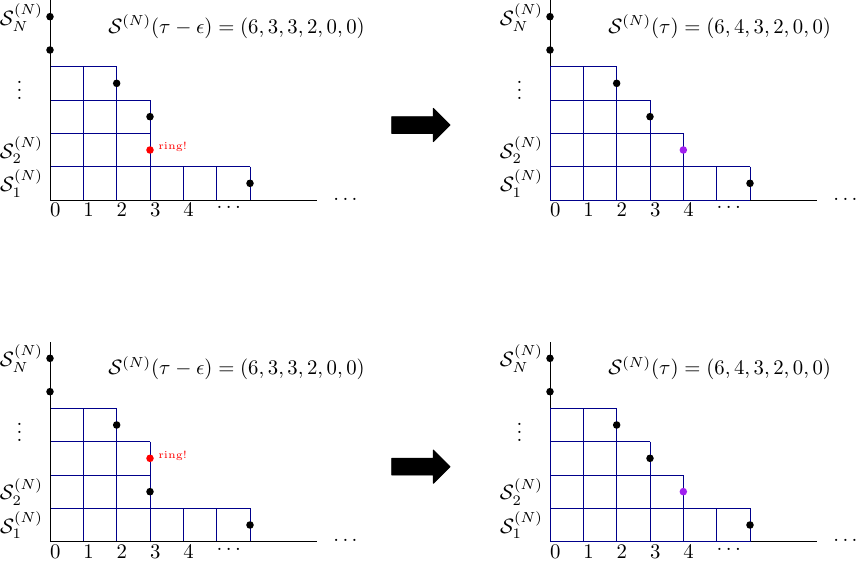}
\end{center}
\caption{A depiction of the process $\Pois^{(N)}(\tau) = (\Pois_1^{(N)}(\tau),\ldots,\Pois_N^{(N)}(\tau))$ as a Young diagram in English notation when $N=6$. If the clock associated to $\Pois^{(N)}_2 = 2$ rings at time $\tau$ and the process was previously in state $(6,3,3,2,0,0)$ (i.e. $\Pois^{(N)}(\tau-\eps) = (6,3,3,2,0,0)$ for all sufficiently small $\eps > 0$), then $\Pois^{(N)}_2 $ increases by $1$ and so $\Pois^{(N)}(\tau) = (6,4,3,2,0,0)$ (above). If instead the clock associated to $\Pois^{(N)}_3 = 2$ rings at time $\tau$ and the process was previously in state $(6,3,3,2,0,0)$, then $\Pois^{(N)}_2 $ increases instead by $1$ and so still $\Pois^{(N)}(\tau) = (6,4,3,2,0,0)$ (below), demonstrating the reflection condition.}\label{fig:poisson_walks_horiz}
\end{figure}

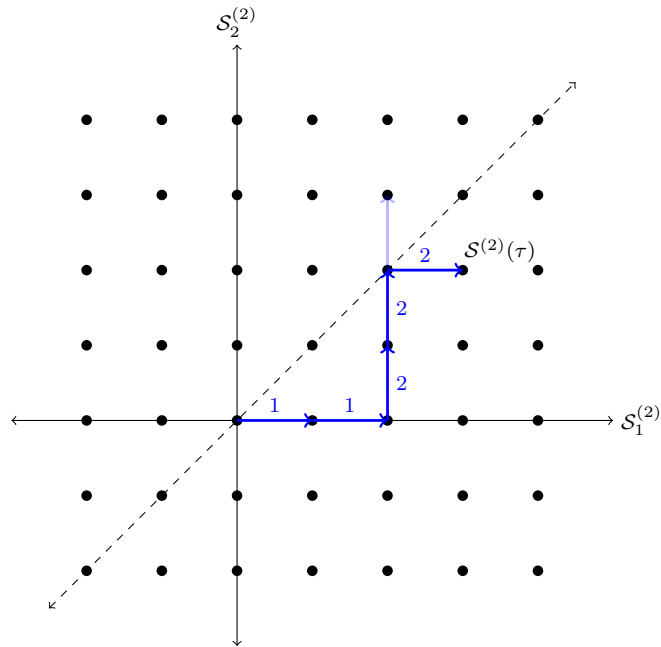
\begin{figure}[htbp]
\begin{center}
\begin{tikzpicture}[scale=1]

  \draw[<->] (-3,0) -- (5,0) node[right] {$\Pois^{(2)}_1$};
  \draw[<->] (0,-3) -- (0,5) node[above] {$\Pois^{(2)}_2$};

  \foreach \x in {-2,...,4}
    \foreach \y in {-2,...,4}
      \fill (\x,\y) circle (2pt);

  \draw[<->,dashed] (-2.5,-2.5) -- (4.5,4.5);

  \draw[->,line width = 1.05 pt,blue] (0,0) -- (1,0)  node[midway, above] {$1$};
  \draw[->,line width = 1.05 pt,blue] (1,0) -- (2,0) node[midway, above] {$1$};
  \draw[->,line width = 1.05 pt,blue] (2,0) -- (2,1) node[midway, right] {$2$};
  \draw[->,line width = 1.05 pt,blue] (2,1) -- (2,2) node[midway, right] {$2$};
  \draw[->,line width = 1.05 pt,blue] (2,2) -- (3,2) node[midway, above] {$2$};
  \draw[->,line width = 1.05 pt,blue,opacity=.3] (2,2) -- (2,3);

  \node[above] at (3.5,2) {$\Pois^{(2)}(\tau)$};

\end{tikzpicture}
\end{center}
\caption{The reflection condition of \Cref{def:poisson_walks} in the case $n=2$: here $(\Pois^{(2)}_1(\tau),\Pois^{(2)}_2(\tau))$ is portrayed as an up-right walk in the $x-y$ plane lying below the line $y=x$, and each jump is labeled by which clock rings. In the final jump, the second clock rings, but due to the reflection condition, $\Pois^{(2)}_2$ does not increase---the result of such an increase is shown as an opaque arrow---but rather $\Pois^{(2)}_1$ increases instead.}\label{fig:n=2_reflection}
\end{figure}

\begin{thm}\label{thm:dbm_poisson_intro}
Let $X^{(N)}(\tau) := Y^{(N,\delta_{(1,0[N-1])},1)}(\tau)$ in the notation of \Cref{def:the_processes}. Then
\begin{equation}
\SN\left(X^{(N)}(\tau)\right)=\Pois^{(N)}\left(\left(\frac{1}{t} \frac{1-t}{1-t^{N}}\right) \tau\right)
\end{equation}
in multi-time distribution, where we take the parameter $t$ in $\Pois^{(N)}$ to be $1/p$.
\end{thm}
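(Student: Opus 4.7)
The plan is to identify $\SN(X^{(N)})$ with the time-reparametrized reflected Poisson walk $\Pois^{(N)}$ by exhibiting them as continuous-time Markov chains on $\Sig_N$ with the same initial state $(0,\ldots,0)$ and the same infinitesimal jump rates.

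\emph{Markov property and reduction.} Because each multiplicand in \Cref{def:the_processes} is $\GL_N(\Z_p)$-bi-invariant in law, and $\SN$ is invariant under the bi-$\GL_N(\Z_p)$-action on $\GL_N(\Q_p)$, the process $\SN(X^{(N)}(\tau))$ is Markov on $\Sig_N$. Since $X^{(N)}$ is driven by a rate-$1$ Poisson clock, the infinitesimal rate of a transition $\la\to\mu$ (with $\mu\ne\la$) equals the single-step probability
\[
q(\la,\mu) := \Pr\bigl[\,\SN\bigl(\diag(p^{\la_1},\ldots,p^{\la_N}) \cdot U\,\diag(p,1,\ldots,1)\bigr) = \mu\,\bigr],
\]
with $U$ Haar on $\GL_N(\Z_p)$ (the right factor $V$ is absorbed by right bi-invariance of $\SN$).

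\emph{Pieri / lattice-counting formula.} The lattice $U\cdot(p\Z_p\oplus\Z_p^{N-1})$ is a uniformly random index-$p$ sublattice of $\Z_p^N$, equivalently a uniform random hyperplane in $\F_p^N$. Hence $q(\la,\mu)$ reduces to counting, among the $(p^N-1)/(p-1)$ index-$p$ sublattices $L\subset\Z_p^N$, those for which $\diag(p^{\la_1},\ldots,p^{\la_N})\cdot L$ has invariant factors $\mu$. This is a classical $p$-adic computation, equivalent to specializing the Hall--Littlewood Pieri rule $e_1\,P_\la = \sum_\mu \psi'_{\mu/\la}(t)\,P_\mu$ in the principal specialization $(1,t,\ldots,t^{N-1})$ at $t=1/p$. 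Organizing the answer by \emph{addable corners} of $\la$ (indices $j$ with $\la_{j-1}>\la_j$, using $\la_0:=+\infty$, so $j_1=1$ is always addable) and enumerating them as $j_1<\cdots<j_k$, with $j_{k+1}:=N+1$, one obtains that the only non-self transitions are $\la\to\la+e_{j_r}$ with rate
\[
q(\la,\la+e_{j_r}) \;=\; \frac{t^{j_r-1}\bigl(1-t^{j_{r+1}-j_r}\bigr)}{1-t^N}.
\]
A telescoping check gives $\sum_r q(\la,\la+e_{j_r})=1$, consistent with the unit total jump rate.

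\emph{Matching $\Pois^{(N)}$.} By \Cref{def:poisson_walks}, at state $\la$ the clock on coordinate $i$ rings at rate $t^i$ and deposits its box at the largest addable corner $\le i$. Therefore the rows $i\in\{j_r,j_r+1,\ldots,j_{r+1}-1\}$ all route to $j_r$, contributing total rate
\[
\sum_{i=j_r}^{j_{r+1}-1} t^i \;=\; t^{j_r}\,\frac{1-t^{j_{r+1}-j_r}}{1-t}.
\]
Multiplying by the time rescaling $\tfrac{1}{t}\cdot\tfrac{1-t}{1-t^N}$ gives exactly $q(\la,\la+e_{j_r})$. Both chains share initial state and generator, so their multi-time distributions agree.

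\emph{Main obstacle.} The substantive content lies in obtaining the closed form $t^{j_r-1}(1-t^{j_{r+1}-j_r})/(1-t^N)$ for the Pieri probabilities. Individual index-$p$ sublattices contribute terms of order $t^\ell$ for $\ell\in\{j_r,\ldots,j_{r+1}-1\}$, and only after summing over the entire block of equal parts of $\la$ between two consecutive addable corners do these contributions collapse into the telescoped form above. This regrouping is exactly the reflection rule of $\Pois^{(N)}$: each reflection from a row $i$ down to the addable corner $j_r$ is the probabilistic image of one index-$p$ sublattice within that block, and it is this combinatorial identity that makes the reflected walk on $\Sig_N$ the natural $p$-adic shadow of multiplicative Dyson Brownian motion.
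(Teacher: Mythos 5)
Your overall strategy---reduce to a generator computation, compute the single-jump transition probability, and match it against the (time-rescaled) generator of $\Pois^{(N)}$---is the same as the paper's, and your final formula
\[
q(\la,\la+e_{j_r})=\frac{t^{j_r-1}\bigl(1-t^{j_{r+1}-j_r}\bigr)}{1-t^{N}}
\]
is correct: it agrees with what the paper actually uses in the proof of the theorem (the statement of \Cref{thm:compute_single_prod} carries a stray factor $(1-t)$ which is dropped when it is applied, so your formula is the right one), and your identification of the reflection rule ``route the clock of row $i$ to the largest addable corner $\le i$'' with the block summation $\sum_{i=j_r}^{j_{r+1}-1}t^{i}$ is exactly the paper's $t^{\ell}(1-t^{m_{\kappa_\ell}(\kappa)})/(1-t)$. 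Where you genuinely diverge from the paper is in \emph{how} $q(\la,\mu)$ is obtained: the paper proves a deterministic linear-algebra lemma (\Cref{thm:which_is_unit}) identifying, for a fixed $A\in\GL_N(\Z_p)$, precisely which coordinate of the singular-number vector moves, and then pushes it through the explicit Haar-sampling description (\Cref{thm:haar_sampling}); you instead appeal to the lattice-counting interpretation (push-forward of Haar to uniform index-$p$ sublattices) and the Hall--Littlewood/Hecke-algebra structure constants. That route is valid and conceptually illuminating---it explains \emph{why} a Hall--Littlewood process should appear---but as written it has a gap: the sentence ``one obtains that the only non-self transitions are $\la\to\la+e_{j_r}$ with rate $\dots$'' is the entire technical content of the theorem, and you neither carry out the Hall-polynomial count nor cite a reference that states the Pieri coefficient together with the needed normalization by the principal specialization. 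You should either supply that computation (the Hall-polynomial count over index-$p$ sublattices, as you sanity-checked implicitly through the telescoping identity) or point to a precise statement in the literature (e.g.\ Macdonald, \emph{Symmetric Functions and Hall Polynomials}, Chapter V, together with the Satake normalization). The paper's bare-hands lemma is more self-contained, and as a bonus gives a pathwise statement (which singular number moves, conditionally on $U$), whereas the Hecke-algebra route gives only the marginal law of the jump.
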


$\Pois^{(N)}$ and the multiplicative Dyson Brownian motion $(\log \y_1(\tau),\ldots,\log \y_N(\tau))$ both have the singular numbers/log singular values evolving independently, subject to a constraint which keeps them in decreasing order. In the former case the independent parts are Poisson processes with rates in geometric progression, and the constraint takes the form of reflection off the walls of the Weyl chamber $\W_N$. In the latter case they are independent Brownian motions with drifts in arithmetic progression, kept ordered by conditioning to avoid the walls of $\W_N$ for all time. These two constraints result in very different dynamics: when $\Pois^{(N)}$ lies in the interior of $\W_N$, the coordinates evolve independently, while the conditioning of multiplicative Dyson Brownian motion to avoid the walls of $\W_N$ for all time means that the $\log \y_i$ repel one another even when far away. 

Our motivation to highlight $\Pois^{(N)}$ in this paper also comes from a related work \cite{van2023reflecting} where its reflected Poisson dynamics are shown to govern the local dynamics of singular numbers of $p$-adic matrix products under weak universality hypotheses. Similar results relating matrix products to multiplicative Dyson Brownian motion in the complex setting have been shown by Ahn \cite{ahn2022extremal} (see also \cite{ahn2023lyapunov}). See \cite{van2023reflecting} for more detail on the analogy between these results in the complex and $p$-adic settings.

The first appearance of $\Pois^{(N)}$ we are aware of in the literature is in work of Borodin announced in \cite{borodin1995limit}, with detailed proofs appearing later in \cite{borodin1999lln}. That work considered a process on $\Sig_\infty$ which was essentially the $N=\infty$ version of $\Pois^{(N)}$ (see \Cref{def:poisson_walks_generator}), in a discrete-time form where the jumps occur at each time step rather than in continuous time according to an exponential clock. This process was shown to describe the evolution of the Jordan form of growing strictly upper-triangular matrices over $\F_q$, with matrix size playing the role of time. This time-discretized version of $\Pois^{(\infty)}$ was defined in \cite{borodin1995limit} by explicit transition probabilities, without the connection to reflecting random walks, see \cite[Theorem 2.3]{borodin1999lln} (which reinterpreted an earlier equivalent computation by Kirillov \cite[Theorem 2.3.2]{kirillov1995variations}). We note that the arguments in the present paper apply also with $\Z_p$ replaced by $\F_q\llbracket T\rrbracket$ (and $t=1/p$ by $t=1/q$), and the action of $T$ on a finite-dimensional $\F_q\llbracket T\rrbracket$-module is nilpotent and hence gives an upper-triangular matrix in an appropriate basis. Hence the setting of these works is very similar to the present one, and it seems possible that one could derive \cite[Theorem 2.3]{borodin1999lln} from an $N \to \infty$ limit of our result. 

Another interesting feature is that $\Pois^{(N)}$ is a so-called \emph{Hall-Littlewood process}, which was shown in \cite[Proposition 3.4]{van2022q} in the context of interacting particle systems, before the random matrix context in the present work was understood. In addition to motivating $\Pois^{(N)}$ as a natural object, this allows us to analyze its asymptotics using the Macdonald process tools of Borodin-Corwin \cite{borodin2014macdonald}, which we do in a separate work \cite{van2023local}. We mention also that Macdonald processes (in the Schur and Heckman-Opdam limits) have also been useful in analyzing nonintersecting path models, see e.g. Johansson \cite[Section 3]{johansson2002non}, and that the relation of Hall-Littlewood polynomials and the setting of \cite{borodin1995limit,borodin1999lln} is stated more explicitly in the recent work of Cuenca-Olshanski \cite{cuenca2022infinite}.

\begin{rmk}\label{rmk:reflected_rw}
Random walks on Weyl chambers conditioned to never intersect are the subject of an extensive literature with connections to representation theory, total positivity, and other parts of combinatorics, as well as random matrices. See Biane \cite{biane1991quantum,biane1992minuscule}, Grabiner \cite{grabiner1999brownian}, Baryshnikov \cite{baryshnikov2001gues}, Bougerol-Jeulin \cite{bougerol2002paths}, O'Connell-Yor \cite{o2002representation}, Biane-Bougerol-O'Connell \cite{bougerol2005littelmann}, and the references therein. However, we are not aware of corresponding work for reflected (rather than conditioned) random walks on a positive Weyl chamber, corresponding to \ref{item:rsk} in the Preface, and believe it is worth understanding whether the combinatorics is similarly rich. To clarify a potential point of confusion, let us note that reflections across the walls of the Weyl chamber appear across the works which treat conditioned random walks, in analogues of the classical reflection principle for Brownian motion following Gessel-Zeilberger \cite{gessel1992random}; however, the random walks themselves are not reflected at the boundary, but conditioned to avoid it.
\end{rmk}

\begin{rmk}\label{rmk:bt_buildings}
A related body of literature deals with random walks on Bruhat-Tits buildings, of which $\SL_N(\Q_p)/\SL_N(\Z_p)$ is the type $\tilde{A}$ case, see e.g. Parkinson \cite{parkinson2017buildings} and the references therein. These typically treat random walks satisfying a stronger notion of isotropy than ours: theirs in our context would be the assumption that
\begin{equation}\label{eq:strong_isotropy}
\Pr(X(\tau+s) = y| X(\tau) = x) = \Pr(X(\tau+s) = Uy| X(\tau) = Vx) 
\end{equation}
for any fixed $U,V \in \GL_N(\Z_p)$, while ours only requires
\begin{equation}\label{eq:weak_isotropy}
\Pr(X(\tau+s) = y| X(\tau) = x) = \Pr(X(\tau+s) = Uy| X(\tau) = Ux). 
\end{equation}
It is not hard to show by slight modifications of our arguments that the only processes satisfying the strong isotropy condition \eqref{eq:strong_isotropy} and stationary independent increments are of the form $Y^{(N,M,c)}(\tau)$, and indeed this is remarked in the discrete-time setting in Parkinson \cite[p381]{parkinson2007isotropic}. 

However, multiplicative Brownian motion on $\GL_N(\C)/\U(N)$ does \emph{not} satisfy the strong isotropy condition \eqref{eq:strong_isotropy}; indeed, this condition in continuous time precludes continuous sample paths. This is our reason for taking the weaker condition \eqref{eq:weak_isotropy}, which multiplicative Brownian motion does satisfy, even though the resulting constraints on the processes in \Cref{thm:classify_processes_cts_G/K} are weaker than one obtains with \eqref{eq:strong_isotropy}. 
\end{rmk}

\begin{rmk}\label{rmk:additive_case}
    The reason we study $p$-adic analogues of multiplicative Dyson Brownian motion, rather than the related additive Dyson Brownian motion on Hermitian matrices, is as follows. In the complex case, it is natural to view the projection of multiplicative Brownian motion to the homogeneous space $\GL_N(\C)/\U(N)$ as we have explained. The tangent space to $\GL_N(\C)/\U(N)$ is $\mathfrak{gl}_N/\mathfrak{u}_N$, which as a vector space is just $ \{\bi A: A \in \Mat_N(\C) \text{  Hermitian}\}$, the state space of additive Hermitian Brownian motion (up to multiplication by $\bi$). This viewpoint and relation between matrix sum and matrix product processes are nicely explained by Klyachko \cite{klyachko2000random}. Note that at the level of spectra, the additive Dyson Brownian motion is also a linearization of the multiplicative Dyson Brownian motion, since the limit of nonintersecting Brownian motion with drift at small times (with space rescaled appropriately) is just nonintersecting Brownian motion with no drift. 
    
    The analogies we are aware of between the complex and $p$-adic settings, explained for instance in \cite{van2020limits}, are at the level of the linear algebraic groups rather than their tangent spaces. Hence this was the natural setting for the investigations here. We are not sure if there is a reasonable $p$-adic analogue of the additive Dyson Brownian motion on the tangent space, though this is certainly an interesting question.
\end{rmk}

\subsection{$p$-adic multiplicative Brownian motion?}
We believe \Cref{thm:classify_processes_intro} and the discussion directly after it, together with \Cref{thm:dbm_poisson_intro}, give a satisfactory answer to the question posed in the title. However, let us be clear that we have \emph{not} answered the stronger question
\begin{equation*}
\text{\textit{What is the analogue for $\GL_N(\Q_p)$ of multiplicative Brownian motion on $\GL_N(\C)$?}}
\end{equation*}
We have only shown that the singular numbers of such a process must agree with those of the Poissonized matrix product process $Y^{(N,\delta_{(1,0[N-1])},c)}$ in multi-time distribution, but we have made no uniqueness statement at the level of a process on $\GL_N(\Q_p)$. The process $Y^{(N,\delta_{(1,0[N-1])},c)}$ is quite natural, but there may certainly be a more natural one. Returning to \Cref{rmk:bt_buildings}, we expect that such a process may not satisfy the strong isotropy condition \eqref{eq:strong_isotropy}. It additionally seems plausible that a good analogue of multiplicative Brownian motion should not wait at any state for a nonzero amount of time, which $Y^{(N,\delta_{(1,0[N-1])},c)}$ does.  

We believe that finding analogues for $\GL_N(\Q_p)$ of multiplicative Brownian motion is an interesting problem for the future, and hence record some thoughts for any reader wishing to try this. In the simplest case $N=1$, the above discussion concerns processes in continuous ($\R$-valued) time on the group $\Q_p^\times$, for which existing literature on $p$-adic Brownian motions such as Albeverio-Karwowski \cite{albeverio1991diffusion,albeverio1994random} or Evans \cite{evans1989local} (which studies more general totally disconnected abelian groups) likely provides a natural route to answering the question above when $N=1$. However, we are not aware of works concerning stochastic processes on nonabelian $p$-adic groups. 

To prevent confusion for one who wishes to begin reading the primary sources on $p$-adic Brownian motions, it is worth mentioning that many previous works referring to $p$-adic Brownian motions such as Evans \cite{evans1993local,evans2001local} and Bikulov-Volovich \cite{bikulov1997p} are instead studying a process where the time parameter lives in $\Q_p$ rather than $\R$, leading to a different object which has no \emph{a priori} relation to our setting.


\subsection{Notation} Fix a prime $p$ throughout. We write formulas for a real number depending on $p$ in terms of $t=1/p$, and use $t$ interchangeably for $1/p$ and for the parameter in \Cref{def:poisson_walks}, as the latter will be set to $1/p$ anyway when $\Pois^{(N)}$ appears in random matrix theory. The letter $p$ is reserved for the element of $\Q_p$ or $\Z/p^n\Z$ as opposed to the real number.

\subsection{Outline} In \Cref{sec:p-adic} we give basic background and some necessary preliminary results on $\Q_p$ and $p$-adic random matrices. In \Cref{sec:classify_processes} we prove results on processes on $\GL_N(\Q_p)$ and $\GL_N(\Q_p)/\GL_N(\Z_p)$ in discrete and continuous time, and use the latter to derive \Cref{thm:classify_processes_intro}. In \Cref{sec:poisson_walks} we prove \Cref{thm:dbm_poisson_intro} by linear-algebraic computations.

\textbf{Acknowledgments.} I am very grateful to Alexei Borodin for invaluable discussions, comments on the draft, and clarifying the history of his and Kirillov's work. I also wish to thank Andrew Ahn for discussions on (complex) multiplicative Brownian motion, Theo Assiotis for encouraging questions about $p$-adic analogues of Dyson Brownian motion, and the two referees for constructive comments which improved the exposition. This work was partially supported by an NSF Graduate Research Fellowship under grant \#1745302.

\section{$p$-adic matrix preliminaries} \label{sec:p-adic}

We begin with a few paragraphs of background which are essentially quoted from \cite{van2020limits}, and are a condensed version of the exposition in Evans \cite[Section 2]{evans2002elementary}, to which we refer any reader desiring a more detailed introduction to $p$-adic numbers geared toward a probabilistic viewpoint. Fix a prime $p$. Any nonzero rational number $r \in \Q^\times$ may be written as $r=p^k (a/b)$ with $k \in \Z$ and $a,b$ coprime to $p$. Define $|\cdot|: \Q \to \R$ by setting $|r|_p = p^{-k}$ for $r$ as before, and $|0|_p=0$. Then $|\cdot|_p$ defines a norm on $\Q$ and $d_p(x,y) :=|x-y|_p$ defines a metric. We additionally define $\val_p(r)=k$ for $r$ as above and $\val_p(0) = \infty$, so $|r|_p = p^{-\val_p(r)}$. We define the \emph{field of $p$-adic numbers} $\Q_p$ to be the completion of $\Q$ with respect to this metric, and the \emph{$p$-adic integers} $\Z_p$ to be the unit ball $\{x \in \Q_p : |x|_p \leq 1\}$. It is not hard to check that $\Z_p$ is a subring of $\Q_p$. We remark that $\Z_p$ may be alternatively defined as the inverse limit of the system $\ldots \to \Z/p^{n+1}\Z \to \Z/p^n \Z \to \cdots \to \Z/p\Z \to 0$, and that $\Z$ naturally includes into $\Z_p$. 

$\Q_p$ is noncompact but is equipped with a left- and right-invariant (additive) Haar measure; this measure is unique if we normalize so that the compact subgroup $\Z_p$ has measure $1$. The restriction of this measure to $\Z_p$ is the unique Haar probability measure on $\Z_p$, and is explicitly characterized by the fact that its pushforward under any map $r_n:\Z_p \to \Z/p^n\Z$ is the uniform probability measure. For concreteness, it is often useful to view elements of $\Z_p$ as `power series in $p$' $a_0 + a_1 p + a_2 p^2 + \ldots$, with $a_i \in \{0,\ldots,p-1\}$; clearly these specify a coherent sequence of elements of $\Z/p^n\Z$ for each $n$. The Haar probability measure then has the alternate explicit description that each $a_i$ is iid uniformly random from $\{0,\ldots,p-1\}$. Additionally, $\Q_p$ is isomorphic to the ring of Laurent series in $p$, defined in exactly the same way.

Similarly, $\GL_N(\Q_p)$ has a unique left- and right-invariant measure for which the total mass of the compact subgroup $\GL_N(\Z_p)$ is $1$. The restriction of this measure to $\GL_N(\Z_p)$, which we denote by $M_{Haar}(\GL_N(\Z_p))$, pushes forward to $\GL_N(\Z/p^n\Z)$ and is the uniform measures on the finite groups $\GL_N(\Z/p^n\Z)$. This gives an alternative characterization of the Haar measure. The Haar measure on $\GL_N(\Z_p)$ also has a useful explicit characterization.

\begin{prop}\label{thm:haar_sampling}
Let $A \in \Mat_N(\Z_p)$ be a random matrix with distribution given as follows: sample its columns $v_N,v_{N-1},\ldots,v_1$ from right to left, where the conditional distribution of $v_i$ given $v_{i+1},\ldots,v_N$ is that of a random column vector with additive Haar distribution conditioned on the event 
\begin{equation}\label{eq:notinspan}
v_i \pmod{p} \not \in \Span(v_{i+1} \pmod{p},\ldots,v_N \pmod{p}) \subset \F_p^N,
\end{equation}
where in the case $i=N$ we take the span in \eqref{eq:notinspan} to be the $0$ subspace. Then $A$ is distributed by the Haar measure on $\GL_N(\Z_p)$.
\end{prop}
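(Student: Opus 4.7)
My approach is to verify that the measure $\mu$ produced by the sequential column-sampling procedure agrees with the Haar probability measure on $\GL_N(\Z_p)$ by checking that the pushforward of $\mu$ under the reduction map $\GL_N(\Z_p) \to \GL_N(\Z/p^n\Z)$ is the uniform measure for every $n \geq 1$. This suffices by the characterization of Haar measure recalled in the paragraph immediately preceding the statement.

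The first step is to observe that the sampling procedure almost surely produces an element of $\GL_N(\Z_p)$: since $X \in \GL_N(\Z_p)$ iff $\det X$ is a unit iff $X \bmod p \in \GL_N(\F_p)$ iff the columns of $X \bmod p$ form a basis of $\F_p^N$, and this last condition is exactly what the events \eqref{eq:notinspan} enforce column by column as $i$ decreases from $N$ to $1$. In particular, once $v_{i+1},\ldots,v_N$ have been sampled, their reductions mod $p$ are automatically linearly independent, so the subspace $S_i := \Span(v_{i+1} \bmod p,\ldots,v_N \bmod p) \subset \F_p^N$ has dimension $N-i$ and cardinality $p^{N-i}$.

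Next, I would fix $n \geq 1$ and $g \in \GL_N(\Z/p^n\Z)$ with columns $g_1,\ldots,g_N$, and compute $\mu(\{A : A \equiv g \pmod{p^n}\})$ one column at a time. Since the additive Haar measure on $\Z_p^N$ pushes forward to the uniform measure on $(\Z/p^n\Z)^N$, the conditional probability given $v_{i+1},\ldots,v_N$ that $v_i \equiv g_i \pmod{p^n}$ equals
\begin{equation*}
\frac{p^{-nN}}{p^{-N}(p^N - p^{N-i})} = \frac{1}{p^{(n-1)N}(p^N - p^{N-i})},
\end{equation*}
provided $g_i \bmod p \notin S_i$, which holds automatically because $g \bmod p \in \GL_N(\F_p)$. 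Multiplying these conditional probabilities across $i = 1,\ldots,N$ gives
\begin{equation*}
\mu(\{A \equiv g \pmod{p^n}\}) = \frac{1}{p^{(n-1)N^2} \prod_{i=1}^{N}(p^N - p^{N-i})},
\end{equation*}
which is independent of $g$. Recognizing $|\GL_N(\F_p)| = \prod_{i=1}^N (p^N - p^{N-i})$ and $|\ker(\GL_N(\Z/p^n\Z) \to \GL_N(\F_p))| = p^{(n-1)N^2}$, the denominator equals $|\GL_N(\Z/p^n\Z)|$, so the pushforward is uniform as required.

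I do not anticipate any genuine obstacle; the argument is essentially bookkeeping once the conditioning is set up correctly. The only point requiring a little care is the inductive observation that at step $i$ the previously sampled columns automatically have linearly independent reductions mod $p$, so that $\dim S_i = N - i$ at every stage and the conditional probability above takes its advertised form.
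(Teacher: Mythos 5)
Your argument is correct, but it takes a genuinely different route from the paper's. The paper proves left $\GL_N(\Z_p)$-invariance directly: using compactness to reduce to showing $BA = A$ in distribution for fixed $B \in \GL_N(\Z_p)$, then arguing by induction on the columns, exploiting that additive Haar measure on $\Z_p^N$ is $\GL_N(\Z_p)$-invariant and that the conditioning event \eqref{eq:notinspan} is compatible with left-multiplication by $B$ (since $B$ acts invertibly mod $p$, it sends spans to spans). Your approach instead verifies the alternative characterization mentioned in the paragraph before the proposition --- that Haar measure on $\GL_N(\Z_p)$ pushes forward to the uniform measure on each $\GL_N(\Z/p^n\Z)$ --- by computing, via the chain rule, the probability $\mu(A \equiv g \pmod{p^n})$ for each $g \in \GL_N(\Z/p^n\Z)$ and observing it equals $1/|\GL_N(\Z/p^n\Z)|$. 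Both are sound. The paper's argument is a bit more conceptual and avoids counting $|\GL_N(\Z/p^n\Z)|$; yours is more hands-on and has the side benefit of producing the explicit per-residue-class mass, which makes the uniformity visibly manifest rather than deduced from an invariance principle. One small point in your write-up worth spelling out: the conditional probability at step $i$ depends on $S_i$, which depends on $v_{i+1},\ldots,v_N$, but when you condition on the event $\{v_j \equiv g_j \pmod{p^n}\}_{j>i}$ in the chain rule, $S_i = \Span(g_{i+1}\bmod p,\ldots,g_N\bmod p)$ is pinned down, so the conditional probability is constant on the conditioning event and the product formula is legitimate; you implicitly used this but didn't state it.
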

\begin{proof}
Because $\GL_N(\Z_p)$ is compact, it suffices to show the above is a left Haar measure, i.e. for any $B \in \GL_N(\Z_p)$ we must show $BA=A$ in distribution. We show $(v_{N-j},\ldots,v_N) = (Bv_{N-j},\ldots,Bv_N)$ in distribution for any $j$ by induction, which suffices. For the base case, recall (see e.g. \cite{evans2001local}) that additive Haar measure on $\Z_p^N$ is invariant under $\GL_N(\Z_p)$, and $Bv_N \equiv 0 \pmod{p}$ if and only if $v_N \equiv 0 \pmod{p}$, hence $Bv_N = v_N$ in distribution. For the inductive step, we have that $Bv_{N-j+1},\ldots,Bv_N$ satisfy \eqref{eq:notinspan} with $i=N-j+1,\ldots,N$ if and only $v_{N-j+1},\ldots,v_N$ do. Furthermore, for any $(w_{N-j+1},\ldots,w_N)$ in the support of $\Law(v_{N-j+1},\ldots,v_N)$, we have
\begin{equation}
\Law(Bv_{N-j}| v_{N-i} = w_{N-i} \text{ for all }0 \leq i < j) = \Law(v_{N-j}| v_{N-i} = Bw_{N-i} \text{ for all }0 \leq i < j).
\end{equation}
It follows by the inductive hypothesis that 
\begin{equation}
\Law(v_{N-j},\ldots,v_N) = \Law(Bv_{N-j},\ldots,Bv_N),
\end{equation}
completing the proof.
\end{proof}

The following standard result is sometimes known either as Smith normal form or Cartan decomposition.

\begin{prop}\label{prop:smith}
Let $n \leq m$. For any nonsingular $A \in \Mat_{n \times m}(\Q_p)$, there exist $U \in \GL_n(\Z_p), V \in \GL_m(\Z_p)$ such that $UAV = \diag_{n \times m}(p^{\la_1},\ldots,p^{\la_n})$ where $\la \in \Sig_n$. Furthermore, there is a unique such $n$-tuple $\la$. The parts $\la_i$ are known as the \emph{singular numbers} of $A$, and we denote them by $\SN(A) = \la = (\la_1,\ldots,\la_n)$.
\end{prop}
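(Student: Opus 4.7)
The proof has two parts: existence of the decomposition and uniqueness of the singular-number tuple.

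For existence, my plan is to proceed by induction on $n$, using a Gaussian-elimination-style argument exploiting the fact that $\Z_p$ is a discrete valuation ring. First, by multiplying $A$ by a sufficiently large positive power of $p$, I reduce to the case where $A \in \Mat_{n \times m}(\Z_p)$ and at least one entry of $A$ is a unit in $\Z_p^\times$ (this rescaling amounts to a uniform shift of all singular numbers, which can be undone at the end). Next, after permuting rows and columns via permutation matrices in $\GL_n(\Z_p)$ and $\GL_m(\Z_p)$, I may assume $A_{11} \in \Z_p^\times$. Since $A_{11}$ is a unit, it divides every other entry in the first row and column in $\Z_p$, so elementary row and column operations over $\Z_p$ (subtracting $\Z_p$-multiples of the first row, resp. column, from the others) clear out all entries in the first row and column apart from $A_{11}$, and multiplying the first row by $A_{11}^{-1}$ makes that entry equal to $1$. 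All these operations are implemented by multiplication on the left by elements of $\GL_n(\Z_p)$ and on the right by elements of $\GL_m(\Z_p)$. The resulting matrix is block diagonal with a $1$ in the top-left and an $(n-1) \times (m-1)$ submatrix $A'$ over $\Z_p$ in the bottom-right. Applying the inductive hypothesis to $A'$, combining with the initial rescaling, and finishing with a permutation to enforce weakly decreasing order of singular numbers completes the existence proof.

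For uniqueness, my plan is to identify the $\la_i$ with invariants of $A$ that are manifestly preserved under left and right multiplication by $\GL_n(\Z_p)$ and $\GL_m(\Z_p)$. Specifically, for each $1 \leq k \leq n$ let
\begin{equation*}
d_k(A) := \min\{\val_p(\mu) : \mu \text{ is a } k \times k \text{ minor of } A\}.
\end{equation*}
The Cauchy--Binet formula shows that the $k \times k$ minors of $UAV$ are $\Z_p$-linear combinations of the $k \times k$ minors of $A$ (and vice versa, using the inverses of $U,V$), so $d_k(A) = d_k(UAV)$ for any $U \in \GL_n(\Z_p)$, $V \in \GL_m(\Z_p)$. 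A direct computation for $A = \diag_{n \times m}(p^{\la_1},\ldots,p^{\la_n})$ with $\la_1 \geq \cdots \geq \la_n$ gives that the $k \times k$ minors are exactly the products $p^{\la_{i_1} + \cdots + \la_{i_k}}$ over $k$-subsets $\{i_1,\ldots,i_k\} \subset \{1,\ldots,n\}$, and the minimum valuation is achieved by taking the $k$ largest indices, yielding $d_k(A) = \la_{n-k+1} + \cdots + \la_n$. These partial sums determine the $\la_i$ via $\la_{n-k+1} = d_k(A) - d_{k-1}(A)$ (with $d_0 := 0$), proving uniqueness.

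The main bookkeeping subtlety to watch is the ordering convention: this paper writes $\Sig_n$ with $\la_1 \geq \cdots \geq \la_n$, which is the reverse of the divisibility order on classical Smith invariants, so the inductive step naturally extracts the minimum-valuation entry first and a final reversal via anti-diagonal permutation matrices in $\GL_n(\Z_p)$, $\GL_m(\Z_p)$ is needed. Beyond this purely notational care, both parts are standard consequences of $\Z_p$ being a principal ideal domain, and I do not anticipate a genuine obstacle.
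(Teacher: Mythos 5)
Your proof is correct. Note, however, that the paper itself does not prove Proposition~\ref{prop:smith}---it states it as a standard result (Smith normal form / Cartan decomposition) without argument---so there is no proof in the paper to compare against directly.

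That said, your uniqueness argument has substantial overlap with the paper's Proposition~\ref{thm:submatrices_suffice}, which establishes exactly the identity $\la_n+\cdots+\la_{n-k+1}=\inf_{A'}\val_p(\det A')$ over $k\times k$ minors $A'$ of $A$. The key step in both is the invariance of $d_k(A):=\min_{A'}\val_p(\det A')$ under $A\mapsto UAV$ with $U\in\GL_n(\Z_p)$, $V\in\GL_m(\Z_p)$. You obtain this cleanly from Cauchy--Binet: the $k\times k$ minors of $UAV$ are $\Z_p$-linear combinations of those of $A$, so $d_k(UAV)\ge d_k(A)$, and the reverse inequality follows by applying the same bound to $A=U^{-1}(UAV)V^{-1}$. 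The paper instead reduces to the case where $U$ and $V$ are elementary generators of $\GL_n(\Z_p)$ (transpositions, unit scalings, transvections) and handles the transvection case via the ultrametric inequality, checking that when strict inequality in $\val_p(a+b)\ge\min(\val_p a,\val_p b)$ occurs, the minimum is still attained elsewhere. Your Cauchy--Binet route is shorter and arguably more conceptual; the paper's generator-by-generator route has the minor advantage of being entirely explicit about which minor achieves the infimum, but for the purposes of proving Proposition~\ref{prop:smith} itself either works. Your existence argument (pivot on a unit after rescaling by a power of $p$, clear the first row and column, recurse) is the standard diagonalization over a discrete valuation ring and is fine; the one point worth spelling out if you write it up carefully is that after the rescaling the remaining $(n-1)\times(m-1)$ block still has entries in $\Z_p$, so its singular numbers produced by the recursion are nonnegative and the final reordering is just a cyclic shift, not an arbitrary sort.
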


We will often write $\diag_{n \times N}(p^\la)$ for $\diag_{n \times N}(p^{\la_1},\ldots,p^{\la_n})$, and also omit the dimensions $n \times N$ when they are clear from context. We note also that for any $\la \in \Sig_N$, the orbit $\GL_N(\Z_p) \diag(p^\la) \GL_N(\Z_p)$ is compact. The restriction of the Haar measure on $\GL_N(\Q_p)$ to such a double coset, normalized to be a probability measure, is the unique $\GL_N(\Z_p) \times \GL_N(\Z_p)$-invariant probability measure on $\GL_N(\Q_p)$ with singular numbers $\la$, and all $\GL_N(\Z_p) \times \GL_N(\Z_p)$-probability measures and convex combinations of these for different $\la$. These measures may be equivalently described as $U \diag(p^{\la_1},\ldots,p^{\la_N}) V$ where $U,V$ are independently distributed by the Haar probability measure on $\GL_N(\Z_p)$. More generally, if $n \leq m$ and $U \in \GL_n(\Z_p), V \in \GL_m(\Z_p)$ are Haar distributed and $\mu \in \Sig_n$, then $U \diag_{n \times m}(p^\mu) V$ is invariant under $\GL_n(\Z_p) \times \GL_m(\Z_p)$ acting on the left and right, and is the unique such bi-invariant measure with singular numbers given by $\mu$. 

Similarly to eigenvalues and singular values, singular numbers have a variational characterization. We first recall the version for singular values, one version of which states that for $A \in \Mat_{n \times m}(\C)$ (assume without loss of generality $n \leq m$) with singular values $a_1 \geq \ldots \geq a_n$,
\begin{equation}\label{eq:sv_minmax}
\prod_{i=1}^k a_i = \sup_{\substack{V \subset \C^m: \dim(V) = k \\ W \subset \C^n: \dim(W) = k}} |\det(\Proj_W \circ A|_V)|
\end{equation}
where $\Proj$ is the orthogonal projection and $A|_V$ is the restriction of the linear operator $A$ to the subspace $V$. \eqref{eq:sv_minmax} holds because the right hand side is unchanged by multiplying $A$ by unitary matrices, hence $A$ may be taken to be diagonal with singular values on the diagonal by singular value decomposition, at which point the result is easy to see. For a slightly different version which picks out the $k\tth$ largest singular value rather than the product of the $k$ largest, see Fulton \cite[Section 5]{fulton2000eigenvalues}.

For $p$-adic matrices, we state the result slightly differently to avoid referring to orthogonal projection, the reason being that unlike $\U(n)$, $\GL_n(\Z_p)$ does not preserve a reasonable inner product, only the norm. We give two versions, \Cref{thm:minmax} and \Cref{thm:submatrices_suffice}, the former of which is more transparent to prove, and the latter of which is more useful in practice when dealing with matrices with explicit entries.

\begin{prop}\label{thm:minmax}
Let $1 \leq n \leq m$ be integers and $A \in \Mat_{n \times m}(\Q_p)$ with $\SN(A) = (\la_1,\ldots,\la_n)$. Then for any $1 \leq k \leq n$,
\begin{align}\label{eq:raleigh}
\la_n+\ldots+\la_{n-k+1} &= \inf_{\substack{P:\Q_p^n \to \Q_p^n \text{ rank $k$ projection}\\ W \subset \Q_p^m: \dim W = k}} \val_p(\det(PA|_W)). 
\end{align}
\end{prop}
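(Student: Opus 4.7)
The plan is to reduce to the case of diagonal $A$ via Smith normal form and then expand via a Cauchy-Binet identity. First I would verify that both sides of \eqref{eq:raleigh} are invariant under $A \mapsto UAV$ for $U \in \GL_n(\Z_p)$, $V \in \GL_m(\Z_p)$. The left side is invariant by definition of $\SN$. For the right side, right multiplication by $V$ corresponds to the bijection $W \mapsto V^{-1}W$ on $k$-dimensional subspaces of $\Q_p^m$, while left multiplication by $U$ corresponds to the bijection $P \mapsto UPU^{-1}$ on rank-$k$ idempotents of $\Q_p^n$; both preserve the integral structures used to define $\val_p \det$. By \Cref{prop:smith} we may therefore assume $A = \diag_{n \times m}(p^{\la_1}, \ldots, p^{\la_n})$.

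For the upper bound, take $W = \Span(e_{n-k+1}, \ldots, e_n) \subset \Q_p^m$ and let $P$ be the coordinate projection onto $\Span(e_{n-k+1}, \ldots, e_n) \subset \Q_p^n$. Then $PA|_W = \diag(p^{\la_{n-k+1}}, \ldots, p^{\la_n})$, with determinant of valuation $\la_{n-k+1} + \ldots + \la_n$. For the lower bound, factor any rank-$k$ idempotent as $P = XY$ with $X \in \Mat_{n \times k}(\Q_p)$, $Y \in \Mat_{k \times n}(\Q_p)$ and $YX = I_k$, chosen so that the columns of $X$ form a $\Z_p$-basis of $\operatorname{im}(P) \cap \Z_p^n$; this forces $Y \in \Mat_{k \times n}(\Z_p)$. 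Likewise choose $\tilde W \in \Mat_{m \times k}(\Z_p)$ whose columns form a $\Z_p$-basis of $W \cap \Z_p^m$. Then $\det(PA|_W) = \det(YA\tilde W)$, and Cauchy-Binet together with the diagonal structure of $A$ yields
\begin{equation*}
\det(YA\tilde W) = \sum_{\substack{J \subseteq \{1,\ldots,n\} \\ |J| = k}} \left(\prod_{j \in J} p^{\la_j}\right) \det(Y_{\cdot, J}) \det(\tilde W_{J, \cdot}),
\end{equation*}
where each minor lies in $\Z_p$ by integrality. Hence every summand has valuation at least $\sum_{j \in J} \la_j \geq \la_{n-k+1} + \ldots + \la_n$, and the ultrametric inequality gives the required lower bound.

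The main subtlety I expect is the bookkeeping around what $\val_p \det(PA|_W)$ means for an arbitrary rank-$k$ idempotent $P$ and subspace $W$, since the determinant of a map between abstract subspaces is only well-defined up to a choice of bases. The right convention associates to $P$ the saturated sublattice $\operatorname{im}(P) \cap \Z_p^n$ of $\Z_p^n$ and to $W$ the saturated sublattice $W \cap \Z_p^m$ of $\Z_p^m$, and normalizes so that the determinant lies in $\Q_p^\times / \Z_p^\times$. Verifying that this convention is compatible with the $\GL_n(\Z_p) \times \GL_m(\Z_p)$-invariance used in the reduction is where most of the care goes; once it is set up, the Cauchy-Binet step is essentially automatic.
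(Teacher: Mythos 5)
Your approach mirrors the paper's proof: invariance of both sides under $\GL_n(\Z_p)\times\GL_m(\Z_p)$, followed by Smith normal form reduction to diagonal $A$. The paper then simply asserts that the infimum is ``clearly achieved'' by the coordinate projection together with $W=\Span(\vec{e}_{n-k+1},\ldots,\vec{e}_n)$; your Cauchy--Binet computation supplies the lower bound that the paper leaves implicit, and the Cauchy--Binet step itself is set up correctly.

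The one genuine gap is the claim that choosing the columns of $X$ to be a $\Z_p$-basis of $\operatorname{im}(P)\cap\Z_p^n$ ``forces $Y\in\Mat_{k\times n}(\Z_p)$.'' For an arbitrary rank-$k$ idempotent this is false: take $n=2$, $k=1$, $P=\left(\begin{smallmatrix}1&p^{-1}\\0&0\end{smallmatrix}\right)$, so $X$ has column $\vec{e}_1$ but $Y=(1,\,p^{-1})$, which is not integral. Plugging this $P$ into the convention you describe for $\val_p\det$ makes the infimum over unrestricted idempotents $-\infty$, so the statement only holds if ``projection'' is read as a $\Z_p$-\emph{integral} idempotent, i.e.\ one with $P(\Z_p^n)\subseteq\Z_p^n$ (the $p$-adic analogue of an orthogonal projection being a contraction). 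Under that hypothesis each $Pe_j$ lies in $\operatorname{im}(P)\cap\Z_p^n$ and is therefore an integral combination of the columns of $X$, so $Y$ is integral and your Cauchy--Binet bound goes through. You do flag the bookkeeping around $\val_p\det(PA|_W)$ as the main subtlety, but the resolution is a constraint on $P$ itself rather than only a choice of normalization for the determinant; to be fair, the paper's own statement and one-paragraph proof leave this exactly as vague, so this is a defect you inherit rather than introduce.
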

\begin{proof}
If $U_1 \in \GL_n(\Z_p),U_2 \in \GL_m(\Z_p)$, then for any a rank $k$ projection $P$ the matrix $U_1PU_1^{-1}$ is also a rank $k$ projection, and similarly for any $W$ as above $U_2 W$ is also a dimension $k$ subspace. Hence
\begin{equation}
\inf_{\substack{P:\Q_p^n \to \Q_p^n \text{ rank $k$ projection}\\ W \subset \Q_p^m: \dim W = k}} \val_p(\det(PA|_W)) = \inf_{\substack{P:\Q_p^n \to \Q_p^n \text{ rank $k$ projection}\\ W \subset \Q_p^m: \dim W = k}} \val_p(\det(P(U_1AU_2)|_W)).
\end{equation}
By Smith normal form we may choose $U_1,U_2$ so that $U_1AU_2 = \diag(p^{\la_1},\ldots,p^{\la_n})$, hence
\begin{equation}
\text{RHS\eqref{eq:raleigh}} = \inf_{\substack{P:\Q_p^n \to \Q_p^n \text{ rank $k$ projection}\\ W \subset \Q_p^m: \dim W = k}} \val_p(\det(P \diag(p^{\la_1},\ldots,p^{\la_n})|_W)).
\end{equation}
The infimum on the right hand side is clearly achieved by taking $W = \Span(\vec{e_{n-k+1}},\ldots,\vec{e_n})$ (where $\vec{e_i}$ are the standard basis vectors) and $P$ to be the projection onto $\Span(\vec{e_{n-k+1}},\ldots,\vec{e_n})$. This proves \eqref{eq:raleigh}.

\end{proof}

It turns out that one does not have to work with arbitrary projections and subspaces, but may instead consider only minors of the matrix $A$. Here by $k \times k$ minor, we mean any $k \times k$ matrix obtained by deleting rows and columns of the original matrix (and do not, as is also standard, mean the determinant of such a matrix). Though this version is slightly cumbersome to prove, it makes it much easier to relate matrix entries to singular numbers, and we expect it to be useful in future work as well as in this one.

\begin{prop}\label{thm:submatrices_suffice}
With the same setup as \Cref{thm:minmax},
\begin{equation}\label{eq:raleigh_submat}
\la_n+\ldots+\la_{n-k+1} = \inf_{A' \text{ $k \times k$ minor of $A$}} \val_p(\det(A')).
\end{equation}
\end{prop}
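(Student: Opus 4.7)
My plan is to use \Cref{thm:minmax} for one inequality, and Cauchy-Binet combined with Smith normal form for the other. For the easy direction: every $k \times k$ minor of $A$ obtained by restricting to rows $S \subset \{1,\ldots,n\}$ and columns $T \subset \{1,\ldots,m\}$ with $|S|=|T|=k$ can be realized as $\det(P_S A|_{W_T})$, where $W_T := \Span(\vec{e_j}: j \in T) \subset \Q_p^m$ and $P_S:\Q_p^n \to \Q_p^n$ is the coordinate projection onto $\Span(\vec{e_i}: i \in S)$. Thus every $k \times k$ minor determinant appears in the infimum of \eqref{eq:raleigh}, and \Cref{thm:minmax} immediately gives
\[\inf_{A' \text{ $k \times k$ minor of } A} \val_p(\det A') \geq \la_n + \ldots + \la_{n-k+1}.\]

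For the reverse inequality, I would prove that $f(A) := \inf_{A'} \val_p(\det A')$ is invariant under left multiplication by $\GL_n(\Z_p)$ and right multiplication by $\GL_m(\Z_p)$, and then compute $f$ directly on a Smith normal form representative. The Cauchy-Binet formula gives
\[\det\bigl((U_1 A U_2)_{S,T}\bigr) = \sum_{|S'|=|T'|=k} \det((U_1)_{S,S'})\, \det(A_{S',T'})\, \det((U_2)_{T',T}),\]
and since $U_1, U_2$ have entries in $\Z_p$, all their minor determinants lie in $\Z_p$. Taking $p$-adic valuations of both sides yields $\val_p(\det((U_1 A U_2)_{S,T})) \geq f(A)$, hence $f(U_1 A U_2) \geq f(A)$; applying the same inequality with $U_1^{-1}, U_2^{-1}$ gives the reverse, so $f$ is invariant.

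To conclude, by \Cref{prop:smith} choose $U_1, U_2$ with $U_1 A U_2 = \diag_{n \times m}(p^{\la_1}, \ldots, p^{\la_n})$. For this padded diagonal matrix, any $k \times k$ submatrix indexed by $S \neq T$ (or by a $T$ containing a column index $> n$) contains an all-zero row or column, since the only nonzero entries of row $i$ lie in column $i$ for $i \leq n$. Submatrices with $S = T \subset \{1,\ldots,n\}$ are themselves diagonal with determinant $\prod_{i \in S} p^{\la_i}$ and valuation $\sum_{i \in S} \la_i$; minimizing over $|S|=k$ picks out the $k$ smallest parts, giving $\la_{n-k+1} + \ldots + \la_n$. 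Combined with the invariance step this yields $f(A) = \la_n + \ldots + \la_{n-k+1}$, completing the proof. The only real subtlety is the Cauchy-Binet bookkeeping establishing $\GL(\Z_p)$-invariance; the remaining computation on the Smith normal form is elementary.
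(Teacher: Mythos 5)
Your proof is correct, and the key invariance step is done by a genuinely different and cleaner route than the paper's. Both proofs reduce to showing that the quantity $f(A)$, the infimum of valuations of $k\times k$ minor determinants, is unchanged under left and right multiplication by $\GL_n(\Z_p)$ and $\GL_m(\Z_p)$, and then compute $f$ directly on the Smith normal form representative. The paper establishes this invariance by reducing to a generating set of elementary row and column operations and, in the nontrivial case of row additions, invoking the ultrametric inequality together with a case analysis on whether the inequality is strict (using that strict ultrametric inequality forces the two terms to have equal valuation). Your Cauchy--Binet argument bypasses both the reduction to generators and the case analysis: expanding $\det\bigl((U_1 A U_2)_{S,T}\bigr)$ as a sum of products of minors of $U_1$, $A$, $U_2$, observing that the minors of $U_1, U_2 \in \Mat(\Z_p)$ lie in $\Z_p$ and hence have nonnegative valuation, and applying the ultrametric inequality once gives $f(U_1 A U_2) \geq f(A)$; applying this with $U_1^{-1}, U_2^{-1}$ in place of $U_1, U_2$ gives equality. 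This is tighter and avoids the slightly delicate bookkeeping in the paper's type-(iii) case. One small redundancy in your write-up: the invariance argument combined with the diagonal computation already yields the full equality \eqref{eq:raleigh_submat}, so the separate appeal to \Cref{thm:minmax} for the easy direction is not actually needed (the paper's proof is likewise self-contained and makes no use of \Cref{thm:minmax}).
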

\begin{proof}

Clearly
\begin{equation}\label{eq:minor_diag}
 \la_n+\ldots+\la_{n-k+1} = \inf_{A' \text{ $k \times k$ minor of $\diag(p^{\la_1},\ldots,p^{\la_n})$}} \val_p(\det(A')).
\end{equation}
Since $U_1AU_2 = \diag(p^{\la_1},\ldots,p^{\la_n})$ for some $U_1 \in \GL_m(\Z_p),U_2 \in \GL_m(\Z_p)$, to show equality of the right hand side of \eqref{eq:minor_diag} and \eqref{eq:raleigh_submat} it therefore suffices to show that
\begin{equation}
\label{eq:minor_invariant}
\inf_{A' \text{ $k \times k$ minor of $B$}} \val_p(\det(A')) = \inf_{A' \text{ $k \times k$ minor of $UBV$}} \val_p(\det(A'))
\end{equation}
for any $B = (b_{i,j})_{\substack{1 \leq i \leq n \\ 1 \leq j \leq m}} \in \Mat_{n \times m}(\Q_p)$ and $U \in \GL_n(\Z_p),V \in \GL_m(\Z_p)$. 

First note that since $\GL_n(\Z_p)$ is generated by the three elementary row operations
\begin{enumerate}
\item[(i)] elementary transposition matrices, 
\item[(ii)] unit multiple matrices $\diag(1[i-1],u,1[n-i])$ for $u \in \Z_p^\times, 1 \leq i \leq n$, and
\item[(iii)] matrices $(\bbone(i=j) + \bbone(i=x,j=y))_{1 \leq i,j \leq n}$ for some $x \neq y$,
\end{enumerate}
it suffices to prove \eqref{eq:minor_invariant} when $U$ and $V$ are each one of the above types. This is clear for types (i) and (ii). Suppose that one of $U,V$ is of type (iii), without loss of generality $U$ is of type (iii) and $V$ is the identity. Then 
\begin{equation}
UBV = (b_{i,j} + \bbone(i=x)b_{y,j})_{\substack{1 \leq i \leq n \\ 1 \leq j \leq m}}
\end{equation}
differs from $B$ only in the $x\tth$ row. For any two sets of indices $I_x=\{x,i_1,\ldots,i_{k-1}\}$ and $J=\{j_1,\ldots,j_k\}$ which include the row $x$, let $B_{I_x,J}$ be the corresponding minor. Then
\begin{equation}
\det (UB)_{I_x,J} = \det B_{I_x,J} + \det B_{I_y,J}
\end{equation}
so by the ultrametric inequality
\begin{equation}\label{eq:ultrametric}
\val_p(\det (UB)_{I_x,J} ) \geq \min(\val_p(\det B_{I_x,J}),\val_p(\det B_{I_y,J})).
\end{equation}
The $\leq$ direction of \eqref{eq:minor_invariant} follows immediately. For the $\geq$ direction, suppose that the strict equality case of \eqref{eq:ultrametric} holds. It is a standard fact about $\Q_p$ that if strict inequality in \eqref{eq:ultrametric} is achieved, then $\val_p(\det B_{I_x,J}) = \val_p(\det B_{I_y,J})$, so since $(UB)_{I_y,J} = B_{I_y,J}$ we have
\begin{equation}
\val_p(\det (UB)_{I_y,J}) = \val_p(\det B_{I_y,J}) = \val_p(\det B_{I_x,J}).
\end{equation}
It follows that any value achieved by the infimum on the left hand side of \eqref{eq:minor_invariant} must also be achieved by the one on the right hand side, and this proves \eqref{eq:minor_invariant}.
\end{proof}

\begin{rmk}
As mentioned, \Cref{thm:minmax} is a straightforward $p$-adic analogue of the corresponding statement \eqref{eq:sv_minmax} for complex matrices. However, the analogue of \Cref{thm:submatrices_suffice} for complex matrices, namely that products of singular values are related to an infimum over $k \times k$ minors, is manifestly false. As is apparent from the above proof, specific properties of the $p$-adic numbers such as ultrametricity are required to make the reduction from an infimum over all subspaces in \Cref{thm:minmax} to an infimum only over subspaces generated by subsets of the standard basis (and similarly for projections) in \Cref{thm:submatrices_suffice}.
\end{rmk}

We record a few other simple facts about singular numbers which will be useful.

\begin{prop}
\label{thm:multiply_smaller_dimension}
Let $n \leq m$, $A \in \Mat_{n \times m}(\Q_p)$, and $\kappa \in \Sig_m$. Then 
\[
|\SN(\diag(p^{\kappa_1},\ldots,p^{\kappa_n}) A)| = |\SN(A)| + |\kappa|.
\]
\end{prop}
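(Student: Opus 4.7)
The plan is to apply Proposition~\ref{thm:submatrices_suffice} with $k = n$ to both $A$ and $DA$, where $D := \diag(p^{\kappa_1},\ldots,p^{\kappa_n})$, and exploit the simple interaction between left multiplication by a diagonal matrix and the determinants of square minors.

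First I would note that when we choose an $n \times n$ minor of an $n \times m$ matrix, we are forced to take all $n$ rows, so such a minor is parametrized just by a choice of $n$ columns $\{j_1 < \ldots < j_n\} \subseteq \{1,\ldots,m\}$. Let $A'$ denote the corresponding $n \times n$ minor of $A$; then the corresponding minor of $DA$ is exactly $DA'$, and so
\begin{equation*}
\det(DA') \;=\; \det(D)\det(A') \;=\; p^{\kappa_1+\cdots+\kappa_n}\det(A'),
\end{equation*}
which has $p$-adic valuation $|\kappa| + \val_p(\det A')$. Since every $n \times n$ minor of $DA$ arises this way, taking the infimum of $\val_p$ over all such minors gives
\begin{equation*}
\inf_{B' \text{ $n \times n$ minor of } DA}\val_p(\det B') \;=\; |\kappa| + \inf_{A' \text{ $n \times n$ minor of } A}\val_p(\det A').
\end{equation*}
Applying Proposition~\ref{thm:submatrices_suffice} to each side (with $k=n$) converts the infima into $|\SN(DA)|$ and $|\SN(A)|$ respectively, which is the claim. (Here I am reading $\kappa \in \Sig_n$ rather than $\Sig_m$, since only $\kappa_1,\ldots,\kappa_n$ enter in the diagonal factor; if $A$ is not of full rank $n$, both sides are $+\infty$ and the equality still holds trivially.)

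There is no real obstacle here; the only thing to be careful about is noting that taking a $k \times k$ minor of the product $DA$ with $k=n$ commutes with the left action of the (square, diagonal) matrix $D$, which is what makes the determinant factor cleanly. For smaller $k$ one would have to be more careful (since a $k \times k$ minor of $DA$ would only pick up the corresponding $k$ diagonal entries of $D$, and the infimum would no longer factor), but this is precisely why we only need the full-sum case $k = n$ of Proposition~\ref{thm:submatrices_suffice}.
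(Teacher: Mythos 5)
Your proof is correct, and it takes a somewhat different route than the paper's. The paper first handles the square case $m=n$ directly, where $|\SN(B)| = \val_p(\det B)$ and multiplicativity of the determinant gives the result immediately; it then reduces the general $n \leq m$ case to the square one by applying column operations (right-multiplication by $\GL_m(\Z_p)$) to concentrate the nonzero entries in an $n \times n$ block, noting that these operations leave $\SN$ unchanged and commute with left-multiplication by the diagonal factor. You instead invoke Proposition~\ref{thm:submatrices_suffice} with $k=n$, using the observation that an $n\times n$ minor of an $n\times m$ matrix is a choice of columns only, so the corresponding minor of $DA$ is exactly $D$ times the minor of $A$, which lets the constant $|\kappa|$ factor out of the infimum. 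Both arguments ultimately rest on $\det(DB)=\det(D)\det(B)$; yours is slightly more compact but calls on the heavier tool Proposition~\ref{thm:submatrices_suffice}, while the paper's column-reduction route is self-contained and more elementary. You are also right to flag the hypothesis: the statement's $\kappa\in\Sig_m$ should read $\kappa\in\Sig_n$ (equivalently, $|\kappa|$ should be $\kappa_1+\cdots+\kappa_n$), since only the first $n$ entries of $\kappa$ appear in the diagonal factor; the paper's own proof tacitly makes the same correction.
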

\begin{proof}
In the case $m=n$ this follows immediately since $\det(p^\kappa A) = \det(p^\kappa) \det(A)$. In general, $A$ is equivalent by column operations to a matrix $A'$ with nonzero entries only in the left $m \times m$ submatrix $A''$; clearly $\SN(A)=\SN(A'')$. Since column operations commute with left-multiplication by $p^\kappa$, we have $\SN(p^\kappa A) = \SN(p^\kappa A'')$, so we may appeal to the square case.
\end{proof}

\begin{prop}\label{thm:increase_decrease}
Let $n \leq m$, $A \in \Mat_{n \times m}(\Q_p)$, and suppose $B \in \Mat_m(\Q_p)$ has all singular numbers nonnegative (resp. nonpositive). Then $\SN(AB)_i \geq \SN(A)_i$ (resp. $\SN(AB)_i \leq \SN(A)_i$) for each $1 \leq i \leq n$. If $C \in \Mat_n(\Q_p)$ has nonnegative (resp. nonpositive) singular numbers, the same holds with $CA$ replacing $AB$.
\end{prop}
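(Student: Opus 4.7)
The natural approach is to interpret singular numbers via invariant factors of finite $\Z_p$-modules. If $A \in \Mat_{n \times m}(\Z_p)$ has full row rank with $\SN(A) = (\la_1, \ldots, \la_n)$ (all $\la_i \geq 0$), then by Smith normal form $A(\Z_p^m)$ is a sublattice of $\Z_p^n$ and the quotient $\Z_p^n / A(\Z_p^m)$ is a finite $\Z_p$-module whose invariant factors, listed in weakly decreasing order, are precisely $(\la_1, \ldots, \la_n)$. By replacing $A$ with $p^K A$ for $K$ sufficiently large---which shifts all singular numbers under comparison by the same constant $K$ via \Cref{thm:multiply_smaller_dimension} and so preserves the desired inequalities---one reduces to the case where $A$, $B$, $C$, and the products $AB$ or $CA$ all have entries in $\Z_p$ with nonnegative singular numbers.

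For the right multiplication claim with $B$ of nonnegative singular numbers, $B \in \Mat_m(\Z_p)$ yields $B(\Z_p^m) \subseteq \Z_p^m$, hence $AB(\Z_p^m) \subseteq A(\Z_p^m)$ and we obtain a surjection of finite $\Z_p$-modules $\Z_p^n / AB(\Z_p^m) \twoheadrightarrow \Z_p^n / A(\Z_p^m)$. For the left multiplication claim, $C \in \Mat_n(\Z_p)$ is injective as an endomorphism of $\Z_p^n$ (since $C \in \GL_n(\Q_p)$) and therefore induces an isomorphism $\Z_p^n / A(\Z_p^m) \xrightarrow{\sim} C(\Z_p^n) / CA(\Z_p^m)$; combined with the inclusion $C(\Z_p^n) / CA(\Z_p^m) \hookrightarrow \Z_p^n / CA(\Z_p^m)$ coming from $C(\Z_p^n) \subseteq \Z_p^n$, this exhibits $\Z_p^n / A(\Z_p^m)$ as a subgroup of $\Z_p^n / CA(\Z_p^m)$. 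Invoking the standard fact that for any surjection $G \twoheadrightarrow H$ or injection $H \hookrightarrow G$ of finite abelian $p$-groups, the invariant factors of $H$ in weakly decreasing order are coordinate-wise dominated by those of $G$, both cases yield the desired $\SN(AB)_i \geq \SN(A)_i$ and $\SN(CA)_i \geq \SN(A)_i$.

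The nonpositive cases reduce to the nonnegative ones by inversion: if $B$ has nonpositive singular numbers then $B^{-1}$ has nonnegative ones, and applying the just-established inequality to the pair $(AB, B^{-1})$ gives $\SN(A)_i = \SN((AB)B^{-1})_i \geq \SN(AB)_i$, with a parallel argument for left multiplication. The argument is essentially structural, so no serious obstacle arises; the main subtlety is the asymmetry between left and right multiplication at the module level---right multiplication by an integral matrix produces a sublattice of the image of $A$, while left multiplication produces $A(\Z_p^m)$ as a submodule of a different quotient---but both phenomena respect the same partial order on invariant factors, which is why the conclusion of the proposition is symmetric between the two sides.
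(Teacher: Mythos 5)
Your proof is correct, and it takes a genuinely different route from the paper's. The paper writes $B=UDV$ with $U,V\in\GL_m(\Z_p)$ and $D=\diag(p^{\SN(B)})$, reduces to comparing $\SN(AUD)$ with $\SN(AU)$, observes that each $k\times k$ minor determinant of $AUD$ is a nonnegative power of $p$ times the corresponding minor determinant of $AU$, and appeals to \Cref{thm:submatrices_suffice}. You instead pass to the module side: after scaling $A\mapsto p^K A$ to land in the integral case, you read off singular numbers as invariant factors of $\Z_p^n/A(\Z_p^m)$, use the surjection $\Z_p^n/AB(\Z_p^m)\twoheadrightarrow\Z_p^n/A(\Z_p^m)$ for right multiplication and the injection induced by $C$ for left multiplication, invoke the classical fact that surjections and injections of finite abelian $p$-groups force coordinate-wise domination of invariant factors, and handle the nonpositive case by inverting $B$ or $C$. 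A point in your favour: \Cref{thm:submatrices_suffice} characterizes only the tail sums $\la_n+\cdots+\la_{n-k+1}$, so the paper's minor observation directly yields $\sum_{i\ge n-k+1}\SN(AUD)_i\ge\sum_{i\ge n-k+1}\SN(AU)_i$ for each $k$, and some additional reasoning is needed to promote that to the coordinate-wise bound actually asserted (tail-sum domination does not imply coordinate-wise domination in general; compare $(3,1,1)$ and $(2,2,0)$), whereas your route delivers the coordinate-wise inequality outright, making it the tighter argument for what is claimed. One tiny nit: you cite \Cref{thm:multiply_smaller_dimension} for the scaling step, but that result only controls the total $|\SN|$; the entrywise identity $\SN(p^K A)=\SN(A)+(K,\ldots,K)$ is immediate from Cartan decomposition and is what you actually need there.
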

\begin{proof}
Write $B = UDV$ where $D = \diag(p^{\SN(B)})$. Then $\SN(A) = \SN(AU)$ and $\SN(AB) = \SN(AUD)$. Each minor determinant of $AUD$ is a nonnegative (resp. nonpositive) power of $p$ times the corresponding minor determinant of $AU$, so the desired inequality follows from \Cref{thm:submatrices_suffice}. The proof for $CA$ is the same.
\end{proof}

\section{Classifying isotropic processes}\label{sec:classify_processes}

In this section we prove \Cref{thm:classify_processes_intro}, by deducing it from a result (\Cref{thm:classify_processes_cts_G/K}) which translates the constraints of isotropy and stationarity of processes on $\GL_N(\Q_p)/\GL_N(\Z_p)$ into a usable form. For expository purposes, we first prove a version in discrete time (\Cref{thm:classify_discrete_processes}) which makes the basic ideas of \Cref{thm:classify_processes_cts_G/K} slightly more apparent.

\begin{defi}
A stochastic process $X(\tau), \tau \in \Z_{\geq 0}$ on a group $G$ has \emph{independent increments} if for any $s,\tau \in \Z_{\geq 0}$, $X(\tau+s)X(\tau)^{-1}$ is independent of the trajectory of $X(y)$ up to time $\tau$. It has \emph{stationary increments} if 
\begin{equation}
\Law(X(\tau+s)X(\tau)^{-1}) = \Law(X(s)X(0)^{-1})
\end{equation}
for all such $s,\tau$. For a subgroup $K \leq G$, we further say $X(\tau)$ has \emph{$K$-isotropic increments} if 
\begin{equation}
\Law(X(\tau+s)X(\tau)^{-1}) = \Law(kX(\tau+s)X(\tau)^{-1}k^{-1})
\end{equation}
for any $k \in K, s, \tau \geq 0$. We use the same terminology for continuous-time processes with $\Z_{\geq 0}$ replaced everywhere by $\R_{\geq 0}$.
\end{defi}




We now state and prove the discrete-time result, which follows directly from the definitions. 

\begin{prop}\label{thm:classify_discrete_processes}
Let $X(\tau), \tau \in \Z_{\geq 0}$ be a discrete-time stochastic process on $\GL_N(\Q_p)$ started at the identity, with stationary, independent, $\GL_N(\Z_p)$-isotropic increments, and set $M^{(d)}_X := \Law(\SN(X(1)X(0)^{-1}))$. Then there exists a distribution on triples $(U,V,\nu)$, such that the marginal distribution of each pair $(U,\nu)$ and $(V,\nu)$ is $M_{Haar}(\GL_N(\Z_p)) \times M^{(d)}_X$, for which
\begin{equation}\label{eq:disc_wts}
\Law(X(\tau), \tau \in \Z_{\geq 0}) = \Law( U_{\tau} \diag(p^{\nu^{(\tau)}_1},\ldots,p^{\nu^{(\tau)}_N}) V_{\tau}  \cdots U_1 \diag(p^{\nu^{(1)}_1},\ldots,p^{\nu^{(1)}_N}) V_1 , \tau \in \Z_{\geq 0})
\end{equation}
where $(U_i,V_i,\nu^{(i)})$ are iid copies of $(U,V,\nu)$. 
\end{prop}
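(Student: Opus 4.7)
The plan is to exploit stationarity and independence to reduce to analyzing the law of a single increment $Y := X(1)$, then use the $\GL_N(\Z_p)$-isotropy together with an auxiliary Haar randomization to build the required triple $(U, V, \nu)$. By stationarity and independence, the variables $Y_\tau := X(\tau) X(\tau-1)^{-1}$ are i.i.d.\ copies of $Y$, and $X(\tau) = Y_\tau \cdots Y_1$ since $X(0) = I$. It therefore suffices to construct a single triple $(U, V, \nu)$ with the prescribed marginals such that $U \diag(p^\nu) V \stackrel{d}{=} Y$, and then take i.i.d.\ copies to recover the process on the right-hand side of \eqref{eq:disc_wts}.

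First, I would fix a measurable section of the Smith decomposition from \Cref{prop:smith}, writing $Y = U_0 \diag(p^\nu) V_0$ with $\nu = \SN(Y) \sim M_X^{(d)}$ and $(U_0, V_0) \in \GL_N(\Z_p)^2$ measurable in $Y$. The joint distribution of $(U_0, V_0)$ may be complicated, and neither marginal need be Haar. The key step is to introduce an auxiliary $k \sim M_{Haar}(\GL_N(\Z_p))$ independent of $Y$, and set
$$U := k U_0, \qquad V := V_0 k^{-1}, \qquad D := \diag(p^\nu),$$
so that $U D V = k Y k^{-1}$. The $\GL_N(\Z_p)$-isotropy of the increment gives $k Y k^{-1} \stackrel{d}{=} Y$, hence $UDV \stackrel{d}{=} Y$.

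It then remains to verify the marginals. Conditionally on $(U_0, \nu)$, which is independent of $k$, the product $k U_0$ is Haar-distributed on $\GL_N(\Z_p)$ by left-invariance of Haar measure; marginalizing over $U_0$ shows that $U$ is Haar and independent of $\nu$. The same argument using right-invariance applies to $V = V_0 k^{-1}$. Performing this construction simultaneously at each time $\tau$ with independent auxiliary Haar variables $k_\tau$ yields i.i.d.\ triples $(U_\tau, V_\tau, \nu^{(\tau)})$ whose product realizes the increments $k_\tau Y_\tau k_\tau^{-1}$, and \eqref{eq:disc_wts} follows by a second use of isotropy to match the joint law of the entire sequence.

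The main obstacle is conceptual rather than computational: the isotropy hypothesis gives only conjugation-invariance of $Y$, not bi-$\GL_N(\Z_p)$-invariance, so a naive Smith decomposition alone cannot produce Haar-distributed $U, V$. The auxiliary Haar conjugation $k$ is exactly the ingredient needed to upgrade conjugation-invariance to the desired joint law, and the dependence between $U$ and $V$ in the constructed triple precisely records the failure of bi-invariance. I expect the same idea to drive the continuous-time analogue \Cref{thm:classify_processes_cts_G/K}, with the additional work being in identifying the generator and accommodating the fact that $\GL_N(\Q_p)/\GL_N(\Z_p)$ carries the discrete topology.
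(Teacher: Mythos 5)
Your proposal is correct and follows essentially the same route as the paper's proof: both reduce to a single increment via stationarity and independence, take a (measurable) Smith decomposition $Y = U_0\,\diag(p^\nu)\,V_0$, and introduce an auxiliary independent Haar element $k$ (the paper's $\tU$) so that isotropy gives $kYk^{-1}\stackrel{d}{=}Y$, with $U:=kU_0$ and $V:=V_0k^{-1}$ then Haar and independent of $\nu$ by left- and right-invariance of Haar measure. Your closing remark correctly identifies the point also noted in \Cref{rmk:not_indep}, that only the pairs $(U,\nu)$ and $(V,\nu)$ are product-distributed while $(U,V)$ may be dependent.
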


\begin{rmk}\label{rmk:not_indep}
We note that while the pairs $(U,\nu)$ and $(V,\nu)$ are each distributed by product measures, the pair $(U,V)$ need not be. For example, one may have $U=V^{-1} \sim M_{Haar}(\GL_N(\Z_p))$. Of course, $U$ and $V$ can also be independent Haar matrices.
\end{rmk}

\begin{proof}[Proof of \Cref{thm:classify_discrete_processes}]

Consider the increment
\begin{equation}
X(\tau+1) = (X(\tau+1)X(\tau)^{-1})X(\tau)
\end{equation}
corresponding to the time step $\tau \to \tau+1$. By the independent increments property $X(\tau+1)X(\tau)^{-1}$ is independent of $X(0),\ldots,X(\tau)$ and distributed as $X(1)X(0)^{-1}$. Hence $X(\tau+1)X(\tau)^{-1} = WDV$ for $W,V \in \GL_N(\Z_p)$ and $D = \diag(p^{\nu(\tau+1)})$ with $\nu(\tau+1) \sim M_X^{(d)}$ by definition of $M_X^{(d)}$ and stationary increments, and all of these are independent of $X(0),\ldots,X(\tau)$. By isotropy, 
\begin{equation} \label{eq:use_isotropy}
W D V = \tU W D V \tU^{-1}
\end{equation}
in distribution for any fixed $\tU \in \GL_N(\Z_p)$. Hence by averaging, \eqref{eq:use_isotropy} also holds when $\tU$ is random with Haar distribution independent of $W,D,V$ and $X(0),\ldots,X(\tau)$. Because $\tU$ is Haar-distributed independent of $D$ and of $W$ and $V$, $\tU W$ and $V\tU^{-1}$ are each Haar-distributed independent of $D$. Defining $U_{\tau+1} = \tU W$ and $V_{\tau+1} = V \tU^{-1}$, we thus have that the increments are of the form in the right hand side of \eqref{eq:disc_wts}, which completes the proof.



\end{proof}


We now explicitly define the measure and Poisson rate constant claimed to exist in \Cref{thm:classify_processes_intro}. We will work on the homogeneous space $\GL_N(\Q_p)/\GL_N(\Z_p)$, since it is discrete and so all processes on it are Poisson jump processes.

\begin{defi}\label{def:cosets}
For any $X \in \GL_N(\Q_p)$, we denote by $[X]$ the corresponding coset in $\GL_N(\Q_p)/\GL_N(\Z_p)$.
\end{defi}

\begin{defi}\label{def:mx_and_c}
Given a stochastic process $X(\tau), \tau \in \R_{\geq 0}$ on $\GL_N(\Q_p)$ satisfying the conditions of \Cref{thm:classify_processes_intro}, we define
\begin{align}
\begin{split}
\tau' &= \inf\{\tau > 0: [X(\tau)] \neq [X(0)]\} \\ 
M_X &= \Law(\SN(X(\tau'))) \\ 
c &= \E[\tau'].
\end{split}
\end{align}
\end{defi}

\begin{prop}\label{thm:classify_processes_cts_G/K}
 Let $N \in \mathbb{Z}_{\geq 1}$ and let $X(\tau), \tau \in \mathbb{R}_{\geq 0}$ be a Markov process on $\mathrm{GL}_{N}(\mathbb{Q}_{p})$ started at the identity with stationary, independent, $\GL_N(\Z_p)$-isotropic increments. Then there exists a distribution on triples $(U,V,\nu)$, such that the marginal distribution of each pair $(U,\nu)$ and $(V,\nu)$ is $M_{Haar}(\GL_N(\Z_p)) \times M_X$, for which
\begin{equation}\label{eq:cts_wts}
\Law([X(\tau)], \tau \in \R_{\geq 0}) = \Law([U_{P(\tau)} \diag(p^{\nu^{(P(\tau))}_1},\ldots,p^{\nu^{(P(\tau))}_N}) V_{P(\tau)}  \cdots U_1 \diag(p^{\nu^{(1)}_1},\ldots,p^{\nu^{(1)}_N}) V_1] , \tau \in \R_{\geq 0}),
\end{equation}
where $(U_i,V_i,\nu^{(i)})$ are iid copies of $(U,V,\nu)$ and $P(\tau)$ is a Poisson process whose rate is the constant $c$ in \Cref{def:mx_and_c}.
\end{prop}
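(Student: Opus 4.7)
The key observation is that $\GL_N(\Q_p)/\GL_N(\Z_p)$ is countable with the discrete topology, so $[X(\tau)]$ is a pure-jump Markov process. The strategy mirrors the discrete-time argument of \Cref{thm:classify_discrete_processes}, but I must first identify the jump times as a Poisson process, and then apply the isotropy averaging argument of \eqref{eq:use_isotropy} at each jump time.

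First, I would show that the holding time $\tau'$ at the initial coset $[I]$ is exponentially distributed. Stationary independent increments combined with the Markov property on the countable coset space yield the memoryless identity $\Pr(\tau' > s+t) = \Pr(\tau' > s)\Pr(\tau' > t)$, forcing an exponential law whose parameter is determined by $\E[\tau']$ as in \Cref{def:mx_and_c}. By stationarity and the strong Markov property, applying this at each successive jump shows that the successive holding times are iid exponential with the same parameter, so the jump times of $[X(\tau)]$ form the Poisson process $P(\tau)$ claimed in the statement. No explosion occurs because the holding rate is the same constant at every coset by group homogeneity and stationary increments.

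Next, I would analyze the distribution of the first jump. By Cartan decomposition (\Cref{prop:smith}) write $X(\tau') = W \diag(p^\nu) V$ with $W, V \in \GL_N(\Z_p)$ and $\nu \sim M_X$ (from the definition of $M_X$). By $\GL_N(\Z_p)$-isotropy, for any fixed $\tU \in \GL_N(\Z_p)$ we have $X(\tau') \stackrel{d}{=} \tU X(\tau') \tU^{-1}$; averaging over a Haar-distributed $\tU$ independent of $(W,V,\nu)$ and setting $U := \tU W$, $V := V\tU^{-1}$ produces the representation $X(\tau') = U \diag(p^\nu) V$ in distribution, in which each pair $(U,\nu)$ and $(V, \nu)$ has marginal law $M_{Haar}(\GL_N(\Z_p)) \times M_X$, while $(U,V)$ need not be independent, cf.\ \Cref{rmk:not_indep}.

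Finally, iterate via the strong Markov property. Let $\tau'_1 < \tau'_2 < \cdots$ be the successive jump times of $[X(\tau)]$; by stationary independent increments each increment $X(\tau'_k) X(\tau'_{k-1})^{-1}$ is independent of the past and equidistributed with $X(\tau')$. Applying the averaging decomposition to each increment produces iid triples $(U_k, V_k, \nu^{(k)})$, and assembling the telescoping product of increments over $k=1,\ldots,P(\tau)$ yields \eqref{eq:cts_wts}. The principal subtlety lies in the first step: establishing exponential holding times rigorously requires the strong Markov property on the discrete coset space together with the fact that $\tau' > 0$ almost surely (equivalently, that the process has no immediate jumps away from $[I]$); once this is in place, the Cartan/averaging step and the iteration are essentially verbatim repetitions of the discrete-time argument.
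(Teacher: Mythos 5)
Your proposal follows the same strategy as the paper's proof: establish that the coset process $[X(\tau)]$ on the discrete space $\GL_N(\Q_p)/\GL_N(\Z_p)$ has Poisson jump times, decompose the increment at each jump via Cartan decomposition plus the isotropy-averaging trick (setting $U = \tU W$, $V = V\tU^{-1}$), and then iterate by the strong Markov property. The only cosmetic difference is that you inline the Cartan/averaging step and spell out the memoryless argument for exponential holding times, whereas the paper asserts the Poisson structure more briefly and cites the discrete-time result (\Cref{thm:classify_discrete_processes}) for the per-jump decomposition; both treatments rest on the same claim that the holding rate of $[X]$ is the same at every coset.
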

\begin{proof}
First note that the dynamics of $X(\tau)$ commutes with right-multiplication by $\GL_N(\Z_p)$ (in fact, by $\GL_N(\Q_p)$), so $[X(\tau)]$ is Markov. Define the $\Z_{\geq 0}$-valued process
\begin{equation}
N_X(\tau) = |\{0 < s \leq \tau: [X(s)] \neq \lim_{\eps \to 0^+} [X(s-\eps)] \}|,
\end{equation}
i.e. the number of times $[X(\tau)]$ has changed value up to time $\tau$. By the Markov property and stationary increments, $N_X(\tau)$ is a Poisson process $P(\tau)$ with rate $c$ as defined in \Cref{def:mx_and_c}. Let $t_1 < t_2 < \ldots$ be the (random) times realizing $\SN(X(\tau)) \neq \lim_{\eps \to 0^+} \SN(X(\tau-\eps))$ and $t_0 = 0$, so $X(\tau) = X(t_{N_X(\tau)})$ and this is equal in distribution to $X(t_{P(\tau)})$. By Kolmogorov's extension theorem it suffices to show 
\begin{equation}\label{eq:law_eq_with_P}
\Law([X(t_{P(s)})], 0 \leq s \leq \tau) = \Law([U_{P(s)}\diag(p^{\nu(P(s))}) V_{P(s)}  \cdots U_1 \diag(p^{\nu^{(1)}})V_1, 0 \leq s \leq \tau)
\end{equation}
for any fixed $\tau \in \R_{\geq 0}$. It further suffices to show the equality of conditional laws
\begin{multline}\label{eq:law_eq_with_P_cond}
\Law([X(t_{P(s)})], 0 \leq s \leq \tau| P(s),0 \leq s \leq \tau ) \\ 
= \Law([U_{P(s)}\diag(p^{\nu(P(s))}) V_{P(s)}  \cdots U_1 \diag(p^{\nu^{(1)}})V_1], 0 \leq s \leq \tau | P(s),0 \leq s \leq \tau ),
\end{multline}
as then one may average over the distribution of $P(s), 0 \leq s \leq \tau$ to obtain \eqref{eq:law_eq_with_P}. By the strong Markov property, $X(t_{i+1})X(t_i)^{-1}$ is independent of $X(\tau), 0 \leq \tau \leq t_i$ and independent of $t_0,\ldots,t_i$, hence it suffices to show 
\begin{equation}\label{eq:law_eq_with_P_cond_disc}
\Law([X(t_{i})], 0 \leq i \leq n ) 
= \Law([U_{i}\diag(p^{\nu(i)}) V_{i}  \cdots U_1 \diag(p^{\nu^{(1)}})V_1], 0 \leq i \leq n )
\end{equation}
for all $n \in \Z_{\geq 1}$. Since each increment $X(t_{i+1})X(t_i)^{-1}$ is distributed as $X(t_1)X(0)^{-1}$ by independent increments, \eqref{eq:law_eq_with_P_cond_disc} is exactly the discrete case \Cref{thm:classify_discrete_processes} and we are done.
\end{proof}

\begin{proof}[Proof of \Cref{thm:classify_processes_intro}]
Since Smith normal form is independent of right-multiplication by $\GL_N(\Z_p)$, we may write $\SN([X])$ for $[X] \in \GL_N(\Q_p)/\GL_N(\Z_p)$ with no ambiguity. By \Cref{thm:classify_processes_cts_G/K},
\begin{equation}
\SN(X(\tau)) = \SN([X(\tau)]) = \SN([\tU_{P(\tau)} \diag(p^{\tnu^{(P(\tau))}_1},\ldots,p^{\tnu^{(P(\tau))}_N}) \tV_{P(\tau)}  \cdots \tU_1 \diag(p^{\tnu^{(1)}_1},\ldots,p^{\tnu^{(1)}_N}) \tV_1])
\end{equation}
in multi-time distribution, where $\tU_i,\tV_i, \tnu^{(i)}$ correspond to the $U_i,V_i,\nu^{(i)}$ in \Cref{thm:classify_processes_cts_G/K}. We write them with the tildes to distinguish them from 
\begin{equation}
Y^{(N,M_X,c)}(\tau) = U_{P(\tau)} \diag(p^{\nu^{(P(\tau))}_1},\ldots,p^{\nu^{(P(\tau))}_N}) V_{P(\tau)}  \cdots U_1 \diag(p^{\nu^{(1)}_1},\ldots,p^{\nu^{(1)}_N}) V_1 ,
\end{equation}
for which $U_i$ and $V_i$ are independent as we recall from \Cref{def:the_processes}. As in the proof of \Cref{thm:classify_processes_cts_G/K} we are reduced to showing that 
\begin{multline}\label{eq:law_eq_sn}
\Law(\SN(U_{i}\diag(p^{\nu(i)}) V_{i}  \cdots U_1 \diag(p^{\nu^{(1)}})V_1), 0 \leq i \leq n )  \\
=\Law(\SN(\tU_{i}\diag(p^{\tnu(i)}) \tV_{i}  \cdots \tU_1 \diag(p^{\tnu^{(1)}})\tV_1), 0 \leq i \leq n ),
\end{multline}
which we do by induction. The base case is trivial, so assume it holds for some $n$. Then by inductive hypothesis,
\begin{align}\label{eq:ind_tilde_joint_law}
\begin{split}
&\Law(\SN(\tU_{i}\diag(p^{\tnu(i)}) \tV_{i}  \cdots \tU_1 \diag(p^{\tnu^{(1)}})\tV_1), 0 \leq i \leq n +1) \\ 
&=\Law(\SN(U_1\diag(p^{\nu^{(1)}})V_1),\ldots,\SN(U_n \diag(p^{\nu^{(n)}}) V_n \cdots U_1\diag(p^{\nu^{(1)}})V_1), \\ 
&\SN(\diag(p^{\tnu^{(n+1)}}) \tV_{n+1} U_n \diag(p^{\nu^{(n)}}) V_n \cdots U_1\diag(p^{\nu^{(1)}})V_1)),
\end{split}
\end{align}
where we have removed the $\tU_{n+1}$ on the left since it does not affect the singular numbers. Because $(\tnu^{(n+1)},\tV_{n+1}) \sim M_X \times M_{Haar}(\GL_N(\Z_p))$ and $(\nu^{(n+1)},V_{n+1}) \sim M_X \times M_{Haar}(\GL_N(\Z_p))$ as well,
\begin{equation}
\text{RHS\eqref{eq:ind_tilde_joint_law}} = \Law(\SN(U_{i}\diag(p^{\nu(i)}) V_{i}  \cdots U_1 \diag(p^{\nu^{(1)}})V_1), 0 \leq i \leq n +1)
\end{equation}
(adding back in the $U_{n+1}$ which does not affect the singular numbers). This completes the induction to show \eqref{eq:law_eq_sn} and hence the proof.

\end{proof}

\begin{rmk}\label{rmk:any_group}
The above results and proofs apply mutatis mutandis with the groups $\GL_N(\Z_p) \leq \GL_N(\Q_p)$ replaced by any groups $K \leq G$ with $K$ compact and $G/K$ discrete, and $\Sig_N$ replaced by $K\backslash G /K$.
\end{rmk}

We now turn attention to those processes with $M_X = \delta_{(1,0[N-1])}$. \Cref{thm:classify_processes_intro} implies that the evolution of singular numbers of such a process are determined by a rate parameter which may be absorbed by time change. We refer to the discussion directly after \Cref{thm:classify_processes_intro} for why these are natural analogues of multiplicative Brownian motion.

\begin{defi}\label{def:our_process}
For any $N \in \Z_{\geq 1}$, we define a continuous-time stochastic process $X^{(N)}(\tau)$ on $\GL_N(\Q_p)$ by
\begin{equation}
X^{(N)}(\tau) = Y^{(N,\delta_{(1,0[N-1])},1)}(\tau) = U_{P(\tau)} \diag(p,1[N-1])V_{P(\tau)} \cdots U_1 \diag(p,1[N-1])V_1,
\end{equation}
where $P(\tau)$ is a rate $1$ Poisson process and $U_i,V_i, i \in \Z_{\geq 1}$ are independent matrices distributed by the Haar measure on $\GL_N(\Z_p)$.
\end{defi}

\section{$p$-adic Dyson Brownian motion and reflected Poisson walks}\label{sec:poisson_walks}


In this section we prove \Cref{thm:dbm_poisson_intro}. Much of this section consists of computing and comparing Markov generators. We wish to go from equalities of generators to equalities of stochastic processes, for which we use the following standard result of Feller \cite{feller2015integro}, see also Borodin-Olshanski \cite[Section 4.1]{borodin2012markov} which in addition give stronger results than we need:

\begin{prop}\label{thm:cite_feller}
Let $Y_\tau, \tau \in \R_{\geq 0}$ be a Markov process on a countable state space $\mathcal{X}$ with well-defined generator $Q$ satisfying
\begin{equation}
\sup_{a \in \mathcal{X}} |Q(a,a)| < \infty.
\end{equation}
Then the law of $Y_\tau, \tau \in \R_{\geq 0}$ is uniquely determined by $Q$. 
\end{prop}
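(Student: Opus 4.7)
The plan is to leverage the boundedness hypothesis to realize the semigroup as a norm-convergent exponential series, then show this semigroup is forced by $Q$, and finally use the Markov property to pin down the law from the transition probabilities.

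First I would view $Q$ as a bounded linear operator on $\ell^\infty(\mathcal{X})$. Write $C := \sup_{a \in \mathcal{X}}|Q(a,a)|$. Since $Q(a,b) \geq 0$ for $a \neq b$ and row sums are zero, one has $\sum_{b \neq a} Q(a,b) = |Q(a,a)| \leq C$, so for any bounded $f$,
\begin{equation*}
|(Qf)(a)| \leq \sum_{b} |Q(a,b)| \cdot \|f\|_\infty \leq 2C\|f\|_\infty,
\end{equation*}
giving $\|Q\|_{\mathrm{op}} \leq 2C$. Then define
\begin{equation*}
P_t := e^{tQ} = \sum_{n=0}^\infty \frac{t^n Q^n}{n!},
\end{equation*}
which converges absolutely in operator norm for every $t \geq 0$ and defines a norm-continuous semigroup with $\partial_t P_t = Q P_t = P_t Q$ and $P_0 = I$.

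Next I would argue uniqueness of this semigroup as the solution of Kolmogorov's backward equation. If $\tilde{P}_t$ is any family of sub-Markovian kernels satisfying $\partial_t \tilde{P}_t = Q \tilde{P}_t$ with $\tilde{P}_0 = I$ (which must hold for the transition kernels of any Markov process with generator $Q$, by the very definition of generator applied to indicator functions of singletons, legitimate here because $\mathcal{X}$ is countable), then $R_t := \tilde{P}_t - P_t$ satisfies the same linear ODE with $R_0 = 0$, and Gr\"onwall's inequality together with $\|Q\|_{\mathrm{op}} < \infty$ forces $R_t \equiv 0$. Hence the transition function $P_t(a,b) = \Pr(Y_{\tau+t} = b \mid Y_\tau = a)$ is uniquely determined by $Q$.

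Finally, to go from transition kernels to the law of the process, I would invoke the Markov property: for any $0 = t_0 < t_1 < \ldots < t_k$ and any $a_0,\ldots,a_k \in \mathcal{X}$,
\begin{equation*}
\Pr(Y_{t_0} = a_0, \ldots, Y_{t_k} = a_k) = \mu(a_0) \prod_{i=1}^k P_{t_i - t_{i-1}}(a_{i-1},a_i),
\end{equation*}
where $\mu$ is the initial distribution (specified as part of the data of the process). This determines all finite-dimensional distributions, hence the law on the path space by Kolmogorov's extension theorem. The main obstacle, and the only place the boundedness hypothesis is genuinely used, is step one: without $\sup_a |Q(a,a)| < \infty$ the series defining $P_t$ need not converge, and one can have non-unique (e.g. explosive versus non-explosive) Markov processes sharing the same $Q$; here this worry is eliminated. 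Alternatively, one could give an entirely probabilistic proof via the jump-hold representation, since bounded $Q$ rules out explosion and yields a global pathwise construction from an i.i.d.\ collection of exponential clocks and jump choices, but the semigroup argument above is cleaner.
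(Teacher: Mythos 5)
Your proof is correct. Note that the paper does not give its own proof of this proposition; it cites it as a standard result of Feller \cite{feller2015integro} (see also Borodin--Olshanski \cite{borodin2012markov}). Your argument---bounding $\|Q\|_{\mathrm{op}}$ on $\ell^\infty(\mathcal{X})$ by $2\sup_a|Q(a,a)|$, realizing the transition semigroup as the norm-convergent exponential $e^{tQ}$, and then pinning it down via the backward equation plus Gr\"onwall---is exactly the standard route in the bounded-rate setting, and the passage from the transition function to the law of the process via finite-dimensional distributions and Kolmogorov extension is likewise standard. The one place where you are somewhat terse is the claim that the transition kernels of \emph{any} Markov process with generator $Q$ satisfy the backward equation at all $t$ (not just differentiability at $t=0$); this requires interchanging a limit with the sum in Chapman--Kolmogorov, which is justified here precisely because the rates are uniformly bounded. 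You flag this correctly as the crux, and it is where the hypothesis is used, so the sketch is sound.
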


We now give the generator for $\Pois$, which in light of \Cref{thm:cite_feller} serves as an alternative and more formal definition. 

\begin{defi}
    \label{def:poisson_walks_generator}
    Let $n \in \N$, $\mu \in \Sig_{n}$ and $t \in (0,1)$. We define the stochastic process $\Pois^{\mu,n}(\tau) = (\Pois^{\mu,n}_1(\tau),\ldots,\Pois^{\mu,n}_n(\tau))$ on $\Sig_n$ as the Markov process with initial condition $\mu$ and generator given by\footnote{We suppress $n$ and $t$ in the notation for the generator, but of course it depends on both.} 
    \begin{equation}\label{eq:poisson_gen}
    B_{\Pois}(\kappa,\nu) := \begin{cases} 
-t\frac{1-t^n}{1-t} & \kappa = \nu \\ 
t^{\ell} + \ldots + t^{\ell+m_{\kappa_\ell}(\kappa)-1} = t^\ell \frac{1-t^{m_{\kappa_\ell}(\kappa)}}{1-t} & \kappa \prec \nu \text{ and } (\nu_i)_{1 \leq i \leq n} = (\kappa_i + \bbone(i=\ell))_{1 \leq i \leq n} \\
0 & \text{otherwise}
\end{cases}.
    \end{equation}
    We further allow $n=\infty$, in which case we take $t^n=0$ in the above formulas, and may have $m_{\kappa_\ell}(\kappa) = \infty$.
\end{defi}

This is equivalent to the description in \Cref{def:poisson_walks} since the off-diagonal entries $B_\Pois(\kappa,\nu)$ are simply the sums of the jump rates of all indices $\Pois_i^{\mu,n}$ such that the ringing of the clock associated to that index will move $\Pois^{\mu,n}$ from $\kappa$ to $\nu$ by the rules of \Cref{def:poisson_walks}, as the reader may easily convince themselves. The following result is the main computation for \Cref{thm:dbm_poisson_intro}.

\begin{prop}\label{thm:compute_single_prod}
Let $N \in \N$, $\kappa \in \Sig_N$, and let $1 \leq \ell \leq N$ be such that $\nu := \kappa + \vec{e}_\ell$ lies in $\Sig_N$. Then for a Haar-distributed element $U$ of $\GL_N(\Z_p)$,
\begin{equation}\label{eq:1box_prod_prob}
\Pr(\SN(\diag(p,1[N-1])U\diag(p^\kappa)) = \nu) = \frac{1-t}{1-t^N} t^{\ell-1} (1-t^{m_{\kappa_\ell}(\kappa)}),
\end{equation}
where as usual $t=1/p$.
\end{prop}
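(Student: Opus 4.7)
The plan is to reduce the question to an elementary counting problem over $\F_p$ via the minor characterization of singular numbers. Writing $A := \diag(p, 1[N-1]) U \diag(p^\kappa)$, we have $A_{ij} = U_{ij} p^{\bbone(i=1) + \kappa_j}$ and $\val_p(\det A) = |\kappa| + 1$. Since $\diag(p, 1[N-1])$ has nonnegative singular numbers, \Cref{thm:increase_decrease} gives $\SN(A)_i \geq \kappa_i$ coordinatewise, so $\SN(A) - \kappa$ is a nonnegative integer vector with sum $1$. Hence almost surely $\SN(A) = \kappa + \vec{e}_\ell$ for a unique (random) $\ell$, and the task reduces to computing the probability of each admissible value of $\ell$.

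For each $k$, expanding $\val_p(\det A_{IJ}) = \bbone(1 \in I) + \sum_{j \in J} \kappa_j + \val_p(\det U_{IJ})$ gives, via \Cref{thm:submatrices_suffice}, the lower bound $\SN(A)_{N-k+1} + \ldots + \SN(A)_N \geq \kappa_{N-k+1} + \ldots + \kappa_N$, with equality iff some $J$ achieving the minimum $\sum_{j \in J} \kappa_j$ admits $I \subseteq \{2, \ldots, N\}$ with $U_{IJ}$ invertible modulo $p$. Letting $u_1, \ldots, u_N \in \F_p^N$ denote the columns of $U \bmod p$, existence of such an $I$ for fixed $J$ is equivalent to the projections $\{u_j|_{\{2,\ldots,N\}}\}_{j \in J}$ being independent in $\F_p^{N-1}$, which is in turn equivalent to the geometric condition $e_1 \notin \Span(u_j : j \in J)$.

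Let $v_1 > \ldots > v_r$ be the distinct values of $\kappa$ with multiplicities $m_1, \ldots, m_r$, and set $K_b := \Span(u_j : \kappa_j < v_b)$, giving a descending flag $\F_p^N = K_0 \supsetneq K_1 \supsetneq \ldots \supsetneq K_r = 0$. A case analysis on $k$, tracking (as $k$ sweeps through a block $b$) which subsets $J$ attain the minimal $\sum_{j \in J} \kappa_j$, shows that the above existence condition first fails at $k = N - \ell + 1$ precisely when $e_1 \in K_{b^*-1} \setminus K_{b^*}$, where $b^*$ is the block whose starting index is $\ell$ (so $\ell = m_1 + \ldots + m_{b^*-1} + 1$ and $m_{\kappa_\ell}(\kappa) = m_{b^*}$). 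I expect the main technical obstacle to be verifying the transition at the block boundary: when $J$ exhausts block $b^*$, the only valid $J$ satisfies $\Span(u_j : j \in J) = K_{b^*-1}$, so membership of $e_1$ in this span toggles exactly as $e_1$ passes from above $K_{b^*-1}$ into it, and one must also confirm that partial fillings of block $b^*$ always admit an excluded column in the support of $e_1$.

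To finish, use that the Haar measure on $\GL_N(\Z_p)$ is inversion-invariant: $U^{-1}$ is Haar-distributed, so its first column reduces mod $p$ to a uniformly random $w \in \F_p^N \setminus \{0\}$. Since $w = U^{-1} e_1$ gives the coordinates of $e_1$ in the basis $\{u_j\}$, the event $e_1 \in K_{b^*-1} \setminus K_{b^*}$ becomes: $w$ vanishes in all coordinates indexed by blocks $1, \ldots, b^*-1$ (a total of $\ell - 1$ forced zeros), is nonzero in at least one coordinate of block $b^*$ ($p^{m_{b^*}} - 1$ choices), and is arbitrary on the remaining $m_{b^*+1} + \ldots + m_r$ coordinates. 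The resulting ratio $(p^{m_{b^*}} - 1)\, p^{m_{b^*+1} + \ldots + m_r} / (p^N - 1)$ simplifies, after substituting $t = 1/p$ and using $\ell - 1 = m_1 + \ldots + m_{b^*-1}$, to the right-hand side of \eqref{eq:1box_prod_prob}.
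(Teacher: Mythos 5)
Your approach is correct but genuinely different from the paper's. The paper first passes to the inverse matrix, conjugates by the order-reversing permutation so that the new diagonal signature is again weakly decreasing, and then invokes a standalone Lemma (\Cref{thm:which_is_unit}) which says that for $A\in\GL_N(\Z_p)$ the singular numbers of $\diag(p^\kappa)A\diag(p^{-1},1,\ldots,1)$ are $\kappa-\vec{e}_\ell$, with $\ell$ read off from the largest row index whose first-column entry is a unit. That lemma is itself proved by a minor analysis in the spirit of \Cref{thm:submatrices_suffice}, so the content is similar, but your version works directly on $\diag(p,1[N-1])U\diag(p^\kappa)$ without inverting, replacing the paper's ``largest unit index in the first column'' with the geometric condition $e_1\in K_{b^*-1}\setminus K_{b^*}$ on the flag $K_b=\Span(u_j:\kappa_j<v_b)$. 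Both arguments ultimately push the randomness onto the first column of a Haar element of $\GL_N(\Z_p)$ reduced mod $p$ (you via inversion-invariance and $w=U^{-1}e_1$, the paper via \Cref{thm:haar_sampling} applied to $V=\sigma U^{-1}$), so the two computations are equivalent after that point. Your route is somewhat more self-contained, at the cost of the block-transition bookkeeping you flagged; the paper's route isolates the linear algebra into a reusable lemma.

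Two remarks. First, the ``technical obstacle'' you flag is real but easily dispatched: if $e_1\in K_{b^*-1}\setminus K_{b^*}$, write $e_1$ in the basis $\{u_j\}$ and pick $j_0\in B_{b^*}$ with nonzero coefficient; then any proper subset $S\subset B_{b^*}$ omitting $j_0$ satisfies $e_1\notin K_{b^*}+\Span(u_j:j\in S)$ by uniqueness of coordinates, so a valid $J$ always exists when $m<|B_{b^*}|$. Second, your final assertion that the ratio $(p^{m_{b^*}}-1)p^{m_{b^*+1}+\cdots+m_r}/(p^N-1)$ simplifies to the stated right-hand side of \eqref{eq:1box_prod_prob} is not actually true: that ratio equals $t^{\ell-1}(1-t^{m_{\kappa_\ell}(\kappa)})/(1-t^N)$, which is \emph{missing} the factor $(1-t)$ appearing in \eqref{eq:1box_prod_prob}. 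The discrepancy is not an error on your part but a typo in the statement of \Cref{thm:compute_single_prod}: the extra $(1-t)$ should not be there. One can see this already at $N=1$, where the probability is $1$ but the stated formula gives $1-t$, and by checking that $\sum_\ell t^{\ell-1}(1-t^{m_{\kappa_\ell}(\kappa)})/(1-t^N)=1$ by telescoping while the stated version sums to $1-t$. The proof of \Cref{thm:dbm_poisson_intro} in the paper in fact uses the corrected value $A(\kappa,\nu)=t^{\ell-1}(1-t^{m_{\kappa_\ell}(\kappa)})/(1-t^N)$, so your computation agrees with what the paper actually needs; you should just state your final answer rather than claim it matches the (erroneous) displayed formula.
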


We require a key linear-algebraic lemma.

\begin{lemma}\label{thm:which_is_unit}
Let $A = (a_{i,j})_{1 \leq i,j \leq N} \in \GL_N(\Z_p)$ and $\kappa \in \Sig_N$. Let $\tell = \sup \{1 \leq i \leq N: a_{i,1} \in \Z_p^\times\}$ (note that the set is nonempty since $A \in \GL_N(\Z_p)$), and $\ell = \sup \{i: \kappa_i = \kappa_\tell\}$. Then 
\begin{equation}\label{eq:1jump}
\SN(\diag({p^{\kappa}})A\diag(p^{-1},1,\ldots,1)) = \kappa - \vec{e}_\ell.
\end{equation}
\end{lemma}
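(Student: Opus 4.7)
The plan is to apply \Cref{thm:submatrices_suffice}: for each $1 \leq k \leq N$, I will compute $\min_{|I|=|J|=k} \val_p \det B_{I,J}$ where $B := \diag(p^\kappa) A \diag(p^{-1},1,\ldots,1)$, and show this equals $\sum_{i=N-k+1}^N (\kappa - \vec{e}_\ell)_i = \sum_{i=N-k+1}^N \kappa_i - \bbone(\ell \geq N-k+1)$. Since a signature is determined by these partial sums, this yields the result. The key observation is that $B_{i,j} = p^{\kappa_i - \bbone(j=1)} a_{i,j}$, so for any $I, J$,
\begin{equation*}
\det B_{I,J} = p^{\sum_{i \in I} \kappa_i - \bbone(1 \in J)} \det A_{I,J}.
\end{equation*}
Thus the problem reduces to jointly choosing $I, J$ to minimize $\sum_{i \in I} \kappa_i$, to include $1 \in J$ (for the extra $-1$), and to keep $\val_p \det A_{I,J} = 0$.

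For the upper bound (minors achieving the claimed value), when $\ell \geq N-k+1$ I take $I' = (\{N-k+1,\ldots,N\} \setminus \{\ell\}) \cup \{\tell\}$; this preserves the minimal sum $\sum_{i=N-k+1}^N \kappa_i$ because $\kappa_{\tell} = \kappa_\ell$ by definition of $\ell$. Since $a_{\tell,1}$ is a unit, column $1$ of $A_{I',*} \bmod p$ is nonzero, and using that $A_{I',*}$ has rank $k$ mod $p$ (rows of $A \bmod p$ are linearly independent), I extend $\{\text{column } 1\}$ to a basis of $\F_p^k$ by picking $k-1$ further columns $J' \subset \{2,\ldots,N\}$, yielding a $k\times k$ minor $A_{I',\{1\}\cup J'}$ with unit determinant. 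When $\ell < N-k+1$, I instead take $I = \{N-k+1,\ldots,N\}$; here $\tell \leq \ell < N-k+1$ forces $a_{i,1} \in p\Z_p$ for all $i \in I$, so column $1$ of $A_{I,*}$ vanishes mod $p$, and the remaining columns alone must span $\F_p^k$, producing the needed $J \subset \{2,\ldots,N\}$.

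For the lower bound, the bound $\val_p \det B_{I,J} \geq \sum_{i=N-k+1}^N \kappa_i - 1$ is immediate in general, which already suffices when $\ell \geq N-k+1$. The interesting case is $\ell < N-k+1$ with $1 \in J$, where I must rule out the apparent $-1$ discount. I split on whether $I \subset \{\tell+1,\ldots,N\}$: in the former subcase, all $a_{i,1}$ for $i \in I$ lie in $p\Z_p$, and expanding $\det A_{I,J}$ along column $1$ gives $\val_p \det A_{I,J} \geq 1$, absorbing the $-1$. In the latter subcase, some $i_0 \in I$ satisfies $i_0 \leq \tell$, and the strict drop $\kappa_\ell > \kappa_{\ell+1}$ at the end of the plateau, combined with $\ell+1 \leq N-k+1$ and integrality of signatures, forces $\kappa_{i_0} \geq \kappa_\ell \geq \kappa_{N-k+1}+1$, so $\sum_{i \in I} \kappa_i \geq \sum_{i=N-k+1}^N \kappa_i + 1$, again absorbing the $-1$. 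The main obstacle will be exactly this dichotomy: it is the place where the dual role of $\ell$ (as the end of a $\kappa$-plateau) and $\tell$ (as the last unit in column $1$) interact, and where integrality of signatures is essential.
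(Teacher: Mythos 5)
Your argument is essentially correct and takes a genuinely different route from the paper's. The paper does not compute every partial sum directly; it first establishes $\nu_i = \kappa_i$ for $i > \ell$ by row-by-row bounds on the entries of $\diag(p^\kappa)A\diag(p^{-1},1,\ldots,1)$, then exhibits a single $(N-\ell+1)\times(N-\ell+1)$ minor (built from row $\tell$ together with rows $\ell+1,\ldots,N$) whose determinant lies in $p^{\kappa_\tell-1+\kappa_{\ell+1}+\cdots+\kappa_N}\Z_p^\times$, forcing $\nu_\ell + \cdots + \nu_N \leq \kappa_\ell + \cdots + \kappa_N - 1$, and finally determines the remaining coordinates from the determinant identity $\sum_i\nu_i = \sum_i\kappa_i - 1$ together with the coordinatewise monotonicity $\nu_i \leq \kappa_i$ of \Cref{thm:increase_decrease}. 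You instead evaluate the infimum in \Cref{thm:submatrices_suffice} for every $k$, using the factorization $\det B_{I,J} = p^{\sum_{i\in I}\kappa_i - \bbone(1\in J)}\det A_{I,J}$ to reduce everything to combinatorics over $(I,J)$. This avoids \Cref{thm:increase_decrease} entirely and makes the roles of integrality and the strict drop $\kappa_\ell > \kappa_{\ell+1}$ transparent, at the cost of a longer case analysis for the lower bound.

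One corner case in your upper bound is broken as written: when $N-k+1 \leq \tell < \ell$, the set $I' = (\{N-k+1,\ldots,N\}\setminus\{\ell\})\cup\{\tell\}$ has only $k-1$ elements, since $\tell$ already lies in $\{N-k+1,\ldots,N\}\setminus\{\ell\}$. The fix is immediate---whenever $\tell \geq N-k+1$ simply take $I = \{N-k+1,\ldots,N\}$, which already contains the unit row $\tell$, and reserve the swap for the case $\tell < N-k+1 \leq \ell$---but as stated the construction is ill-posed. In the lower bound, the step $\sum_{i\in I}\kappa_i \geq \sum_{i=N-k+1}^N\kappa_i + 1$ should also note that $i_0 \leq \tell \leq N-k$ so $i_0 \notin \{N-k+2,\ldots,N\}$, which is what lets you bound the $\kappa$-sum of the remaining $k-1$ indices of $I$ below by $\sum_{i=N-k+2}^N\kappa_i$. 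With these small repairs the proof is complete.
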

\begin{proof}
Let $\nu$ be the signature on the left hand side of \eqref{eq:1jump}. We first show that if $\ell < N$ then
\begin{equation}\label{eq:latter_sns_equal}
(\nu_{\ell+1},\ldots,\nu_N) = (\kappa_{\ell+1},\ldots,\kappa_N).
\end{equation}
For $\ell+1 \leq i \leq N$, all entries in the $i\tth$ row of 
\begin{equation}
\diag({p^{\kappa}})A\diag(p^{-1},1,\ldots,1) = \begin{pmatrix}
p^{\kappa_1-1}a_{1,1} & p^{\kappa_1}a_{1,2} & \cdots & p^{\kappa_1}a_{1,N} \\ 
p^{\kappa_2-1}a_{2,1} & p^{\kappa_2}a_{2,2} & \cdots & p^{\kappa_2}a_{2,N} \\ 
\vdots & \vdots & \ddots & \vdots \\ 
p^{\kappa_N-1}a_{N,1} & p^{\kappa_N}a_{N,2} & \cdots & p^{\kappa_N}a_{N,N} 
\end{pmatrix}
\end{equation}
lie in $p^{\kappa_i}\Z_p$: this is manifestly true for all entries but the first, and for the first entry $p^{\kappa_i-1}a_{i,1}$ it follows because $a_{i,1} \in p\Z_p$ since $i > \ell$. Furthermore, all entries in the top $\ell$ rows lie in $p^{\kappa_\ell-1}\Z_p \subset p^{\kappa_{\ell+1}}\Z_p$, using that $\kappa_\ell > \kappa_{\ell+1}$. Hence for any $(N-\ell) \times (N-\ell)$ minor, all entries in its bottom row lie in $p^{\kappa_N}\Z_p$, all in its second from the bottom row lie in $p^{\kappa_{N-1}}\Z_p$, etc., regardless of which rows are chosen. Thus its determinant will be a sum of products of $N-\ell$ terms lying in $p^{\kappa_N}\Z_p, p^{\kappa_{N-1}}\Z_p,\ldots,p^{\kappa_{\ell+1}}\Z_p$, and hence will lie in $p^{\kappa_{\ell+1}+\ldots+\kappa_N}\Z_p$. Since this is true for every minor, \Cref{thm:submatrices_suffice} implies that $\nu_{\ell+1}+\ldots+\nu_N \geq \kappa_{\ell+1}+\ldots+\kappa_N$. Since $\nu_i \leq \kappa_i$ for all $i$ by \Cref{thm:increase_decrease}, it follows that $\nu_i = \kappa_i$ for $\ell+1 \leq i \leq N$. Running the same argument with $\ell+1$ replaced by any index $j$, so that the first entry of the $j\tth$ row no longer necessarily lies in $p^{\kappa_i}\Z_p$, we still obtain the bound
\begin{equation}\label{eq:nu_lb}
\sum_{i=j}^N \nu_i \geq \sum_{i=j}^N \kappa_i - 1,
\end{equation}
which will be useful later.

Because $A \in \GL_N(\Z_p)$, rows $\tell,\ell+1,\ell+2,\ldots,N$ are linearly independent modulo $p$. The first entry $a_{\tell,1}$ of row $\tell$ lies in $\Z_p^\times$, and the first entries of rows $\ell+1,\ldots,N$ lie in $p\Z_p$, by the definition of $\tell$. Hence the matrix $(a_{i,j})_{\substack{\ell+1 \leq i \leq N \\ 2 \leq j \leq N}}$ is full-rank modulo $p$, so there exist $j_1,\ldots,j_{N-\ell}$ such that the minor of this matrix determined by columns $j_1,\ldots,j_{N-\ell}$ has determinant in $\Z_p^\times$. Since $a_{\tell,1} \in \Z_p^\times$ and $a_{\ell+1,1},\ldots,a_{N,1} \in p\Z_p$, this implies that
\begin{equation}
A' :=
\begin{pmatrix}
a_{\tell,1} & a_{\tell,j_{1}} & \cdots & a_{\tell,j_{N-\ell}} \\ 
a_{\ell+1,1} & a_{\ell+1,j_{1}} & \cdots & a_{\ell+1,j_{N-\ell}} \\ 
\vdots & \vdots & \ddots & \vdots \\ 
a_{N,1} & a_{N,j_{1}} & \cdots & a_{N,j_{N-\ell}} \\ 
\end{pmatrix}
\end{equation}
is an $(N-\ell+1)\times (N-\ell+1)$ minor of $A$ with determinant lying in $\Z_p^\times$. By multiplicaivity of the determinant,
\begin{multline}\label{eq:det_submatrix}
\det(\diag(p^{\kappa_\tell},p^{\kappa_{\ell+1}},\ldots,p^{\kappa_N})A'\diag(p^{-1},1[N-\ell]))  \\ 
= \det 
\begin{pmatrix}
p^{\kappa_\tell - 1} a_{\tell,1} & p^{\kappa_\tell} a_{\tell,j_{1}} &\cdots & p^{\kappa_\tell} a_{\tell,j_{N-\ell}} \\ 
p^{\kappa_{\ell+1} - 1} a_{\ell+1,1} & p^{\kappa_{\ell+1}} a_{{\ell+1},j_{1}} &\cdots & p^{\kappa_{\ell+1}} a_{{\ell+1},j_{N-\ell}} \\ 
\vdots & \vdots & \ddots & \vdots \\ 
p^{\kappa_N - 1} a_{N,1} & p^{\kappa_N} a_{N,j_{1}} &\cdots & p^{\kappa_N} a_{N,j_{N-\ell}} \\ 
\end{pmatrix}  \in p^{\kappa_\tell - 1 + (\kappa_{\ell+1} + \ldots + \kappa_N)}\Z_p^\times.
\end{multline}
Because the matrix in \eqref{eq:det_submatrix} is a submatrix of $\diag({p^{\kappa}})A\diag(p^{-1},1,\ldots,1)$, it follows by \Cref{thm:submatrices_suffice} that 
\begin{equation}\label{eq:sum_differ_1}
\nu_\ell + \ldots + \nu_N \leq \kappa_\ell + \ldots + \kappa_N - 1.
\end{equation}
Combining \eqref{eq:latter_sns_equal}, \eqref{eq:sum_differ_1}, and \eqref{eq:nu_lb} (with $j=\ell$) yields that $\nu_\ell = \kappa_\ell - 1$. Because $\sum_i \nu_i = \sum_i \kappa_i - 1$ (which again follows by multiplicativity of the determinant) and $\nu_i \leq \kappa_i$ for all $i$ by \Cref{thm:increase_decrease}, \eqref{eq:1jump} follows.
\end{proof}

\begin{proof}[Proof of \Cref{thm:compute_single_prod}]
We invert the matrix in \eqref{eq:1box_prod_prob} to rewrite the probability in that equation as 
\begin{equation}\label{eq:inverted_SNs}
\Pr(\SN(\diag({p^{-\kappa_1},\ldots,p^{-\kappa_N}})V'\diag(p^{-1},1,\ldots,1)) = (-\nu_N,\ldots,-\nu_1)),
\end{equation}
where $V'=U^{-1}$ is also Haar distributed. Since the permutation matrix $\sigma = (\bbone(i+j=N+1))_{1 \leq i,j \leq N}$ lies in $\GL_N(\Z_p)$, we have
\begin{align}
\begin{split}
&\SN(\diag({p^{-\kappa_1},\ldots,p^{-\kappa_N}})V'\diag(p^{-1},1,\ldots,1)) \\ 
&= \SN(\sigma \diag({p^{-\kappa_1},\ldots,p^{-\kappa_N}}) \sigma (\sigma V')\diag(p^{-1},1,\ldots,1)) \\ 
&= \SN(\diag(p^{-\kappa_N},\ldots,p^{-\kappa_1})V\diag(p^{-1},1[N-1])) \\ 
&= \SN\left(\begin{pmatrix}
p^{-\kappa_N-1}v_{1,1} & p^{-\kappa_N}v_{1,2} & \cdots & p^{-\kappa_N}v_{1,N} \\ 
p^{-\kappa_{N-1}-1}v_{2,1} & p^{-\kappa_{N-1}}v_{2,2} & \cdots & p^{-\kappa_{N-1}}v_{2,N} \\ 
\vdots & \vdots & \ddots & \vdots \\ 
p^{-\kappa_1-1}v_{N,1} & p^{-\kappa_1}v_{N,2} & \cdots & p^{-\kappa_1}v_{N,N} 
\end{pmatrix}\right)
\end{split}
\end{align}
where $V=\sigma V'$ is also Haar-distributed. Define the (random) integers $\tell(V) = \inf \{1 \leq i \leq N: v_{i,1} \in \Z_p^\times\}$ and $\ell(V) = \sup \{i: -\kappa_{N-i+1} = -\kappa_{N-\tell+1}\}$ as in \Cref{thm:which_is_unit} (with $(-\kappa_N,\ldots,-\kappa_1)$ replacing the $\kappa$ in \Cref{thm:which_is_unit}). Then by \Cref{thm:which_is_unit} with $\kappa,\nu$ replaced by $(-\kappa_N,\ldots,-\kappa_1)$ and $(-\nu_N,\ldots,-\nu_1)$ respectively, we have 
\begin{equation}\label{eq:reduce_to_l(V)}
\Pr(\SN(\diag({p^{-\kappa_N},\ldots,p^{-\kappa_1}})V\diag(p^{-1},1,\ldots,1)) = (-\nu_N,\ldots,-\nu_1)) = \Pr(\ell(V) = N-\ell+1).
\end{equation}
Now we find the distribution of $\ell(V)$, which depends only on the first column $(v_{1,1},v_{2,1},\ldots,v_{N,1})^T$ modulo $p$. By \Cref{thm:haar_sampling}, $(w_1,\ldots,w_N) := (v_{1,1} \pmod{p},\ldots,v_{N,1} \pmod{p})$ is distributed as a uniformly random element of $\F_p^N \setminus 0$. In other words, $\bbone(w_i \neq 0), 1 \leq i \leq N$ is a collection of independent Bernoulli random variables with parameter $t$, conditioned on not all of them being $0$. An elementary computation yields the right hand side of \eqref{eq:1box_prod_prob} and completes the proof. 
\end{proof}

\begin{proof}[Proof of \Cref{thm:dbm_poisson_intro}]
Recall from \Cref{def:our_process} that 
\begin{equation}
X^{(N)}(\tau) := U_{P(\tau)} \diag(p,1[N-1]) V_{P(\tau)} \cdots U_1 \diag(p,1[N-1])V_1 ,
\end{equation}
where $P(\tau)$ is a rate-$1$ Poisson process and $U_1,\ldots, V_1,V_2,\ldots$ are independent Haar elements of $\GL_N(\Z_p)$. Hence the matrices $V_iU_{i-1}, i \geq 1$ are independent Haar matrices, so the process $\SN(X^{(N)})$ on state space $\Sig_N$ has generator
\begin{equation}
A(\kappa,\nu) = \begin{cases}
-1 & \nu = \kappa \\ 
\Pr_{U \sim M_{Haar}(\GL_N(\Z_p))}(\SN(\diag(p^\kappa)U\diag(p,1[N-1])) = \nu) & \text{else}
\end{cases}.
\end{equation}
Applying \Cref{thm:compute_single_prod} yields
\begin{equation}
A(\kappa,\nu) = \begin{cases}
-1 & \nu = \kappa \\ 
\frac{1}{1-t^N}t^{\ell-1} (1-t^{m_{\kappa_\ell}(\kappa)}) & \nu = \kappa + \vec{e}_\ell \\ 
0 & \text{else}
\end{cases}.
\end{equation}
Hence 
\begin{equation}\label{eq:equal_gens}
A(\kappa,\nu) = \frac{1}{t} \frac{1-t}{1-t^N} B_\Pois(\kappa,\nu)
\end{equation}
for all $\kappa,\nu \in \Sig_N$. \Cref{thm:dbm_poisson_intro} now follows from \eqref{eq:equal_gens} and \Cref{thm:cite_feller}.
\end{proof}

\end{document}